\newtheorem{proposition}{\textbf{Proposition}}
\newtheorem{theorem}{\textbf{Theorem}}
\newtheorem{corollary}{\textbf{Corollary}}
\newtheorem{lemma}{\textbf{Lemma}}
\newtheorem*{questionC}{\textbf{Problem C}}
\newtheorem*{questionCinCp}{\textbf{Problem C in $\C_p$}}
\theoremstyle{definition}
\newtheorem{remark}{\textbf{Remark}}
\def\N {\mathbb{N}}
\def\Z {\mathbb{Z}}
\def\Q {\mathbb{Q}}
\def\QQ {\overline{\Q}}
\def\C {\mathbb{C}}
\theoremstyle{remark}
\numberwithin{equation}{section}
\begin{document}

	\title[On the Exceptional Sets of $p$-adic Transcendental Functions]{On the Exceptional Sets of $p$-adic Transcendental Analytic Functions}

	\author{Bruno De Paula Miranda}
	\address{Instituto Federal de Goi\'{a}s, Avenida Saia Velha, Km 6, BR-040, s/n, Parque Esplanada V, Valpara\'{i}so de Goi\'{a}s, GO 72876-601, Brazil}
	\email{bruno.miranda@ifg.edu.br}

	\author{JEAN LELIS}
	\address{Faculdade de Matemática/ICEN/UFPA, Belém - PA, Brazil.}
	\email{jeanlelis@ufpa.br}


	\subjclass[2020]{Primary 11S80,  11J61}
	
	\keywords{algebraic numbers, $p$-adic transcendental analytic functions, Exceptional Set, Mahler's Problem.}
	
\begin{abstract}
In this paper, we study the exceptional sets $S_f$ of $p$-adic transcendental analytic functions $f$ with rational and algebraic coefficients. We establish a necessary condition for a subset $S \subseteq \overline{\mathbb{Q}} \cap B(0, \rho)$ to be the exceptional set of a $p$-adic transcendental analytic function with rational coefficients, demonstrating that, in general, the answer to Mahler's Problem C over $\mathbb{C}_p$ is negative. However, we prove that if $S$ is closed under algebraic conjugation and contains 0, there exist uncountably many transcendental analytic functions $f \in \mathbb{Q}_{\rho}[[z]]$ such that $S_f = S$. Furthermore, if $\rho \geq 1$, $f$ can be taken in $\Z_{\rho}[[z]]$. Additionally, we demonstrate that any $S \subseteq \overline{\mathbb{Q}} \cap B(0, \rho)$ containing 0 can be the exceptional set of uncountably many transcendental analytic functions $f \in \overline{\mathbb{Q}}_{\rho}[[z]]$.
\end{abstract}

	\maketitle


        \section{Introduction and Motivation}

  Many interesting classes of complex numbers which are transcendental over $\Q$ are the values of suitable \textit{transcendental} analytic functions. We recall that a transcendental analytic function is an analytic function $f:\Omega\subseteq \mathbb{C}\to \mathbb{C}$ such that the only polynomial $P(X,Y)$ in $\mathbb{C}[X,Y]$ that satisfies 
\[
P(z,f(z))=0\quad\mbox{for all}\quad z\in\Omega
\]
 is the null polynomial. For instance, the Hermite-Lindemann theorem implies that the transcendental function $f(z)=e^z$ takes transcendental values for all non-zero algebraic numbers. The Gelfond-Schneider theorem and Baker's theorem show that the transcendental functions $g(z)=2^z$ and $h(z)=e^{z\pi+1}$ take transcendental values for all non-rational algebraic numbers and all algebraic numbers, respectively. Assuming Schanuel's Conjecture, we have that the transcendental functions $r(z)=2^{2^{z}}$ and $t(z)=2^{2^{2^{z-1}}}$ take transcendental values for all non-integer and non-positive integer algebraic numbers, respectively.

The study of the arithmetic behavior of a transcendental analytic function in a complex variable began in 1886 with a letter from Weierstrass to Strauss, who proved the existence of such functions mapping $\mathbb{Q}$ into itself. Weierstrass also conjectured the existence of a transcendental entire function $f$ for which $f(\overline{\mathbb{Q}}) \subseteq \overline{\mathbb{Q}}$ (as usual, $\overline{\mathbb{Q}}$ denotes the field of algebraic numbers). Motivated by this kind of study, he defined the \textit{exceptional set} of an analytic function $f: \Omega \subseteq \mathbb{C} \to \mathbb{C}$ as
\[
S_f := \{\alpha \in \overline{\mathbb{Q}} \cap \Omega : f(\alpha) \in \overline{\mathbb{Q}}\}.
\]
Weierstrass' conjecture was settled in 1895 by St\"ackel \cite{stackel1895}.

In his book, Mahler \cite{Mahler3} introduces the problem of studying exceptional sets $S_f$ of transcendental analytic functions $f$ with coefficients in some prescribed set $A$. He denoted by $A_{\rho}[[z]]$ the set of all power series 
\[
f(z)=\sum_{n=0}^{\infty}a_nz^n,
\]
with coefficients in $A$ and radius of convergence $\rho$. After discussing a number of results and examples, Mahler raised some problems about the admissible exceptional sets for these analytic functions. One of them is: 
\begin{questionC}
Let $\rho\in (0,\infty]$ be a radius of convergence. Does there exist, for any choice of $S\subseteq \QQ\cap B(0,\rho)$,  a transcendental analytic function $f\in\mathbb{Q}_{\rho}[[z]]$ for which $S_f=S$?
\end{questionC}

Naturally, since $f$ has rational coefficients, in order to have $S = S_f$, we must have $0 \in S$. Furthermore, since $f(\overline{z}) = \overline{f(z)}$, where the bar denotes complex conjugation, $S$ must be closed under complex conjugation. In 2016, Marques and Ramirez \cite{ramirez2016} answered this question affirmatively for $\rho = \infty$ (i.e., for entire functions). Their result was improved by Marques and Moreira in \cite{gugu2018}, who provided an affirmative answer to Mahler's Problem {\sc C} for any $\rho \in (0, \infty]$.

In 2020, Marques and Moreira \cite{gugu20} solved Mahler's Problem C in $\mathbb{Z}\{z\}$. They proved that any subset of $\mathbb{Q}\cap B(0,1)$ (closed under complex conjugation and containing the element 0) is the exceptional set of a transcendental function analytic on $B(0,1)$ with integer coefficients. In this article, we will consider Mahler's Problem C in the $p$-adic context.

In the course of this work, $p$ denotes a fixed prime number, and $|\cdot|_p$ denotes the $p$-adic absolute value on the field $\Q$ of rational numbers, normalized such that $|p|_p=p^{-1}$.  We denote the unique extension of $|\cdot|_p$ to the field $\Q_p$ of $p$-adic numbers by the same notation $|\cdot|_p$. For completions sake, we recapitulate how to go from $\mathbb{Q}_p$ to $\mathbb{C}_p$. Let $K$ be a finite extension of $\mathbb{Q}_p$ of degree $n$. We know that in $K$ there is a unique absolute value extending the $p$-adic absolute value of $\mathbb{Q}_p$, which is given by 
\begin{equation}\label{AV}
|x| = \sqrt[n]{|N_{K/\mathbb{Q}_p}(x)|_p}
\end{equation}
for every $x \in K$, where $N_{K/\mathbb{Q}_p}(x)$ is the norm from $K$ to $\mathbb{Q}_p$ applied to $x$. Furthermore, we know that $K$ is complete with respect to this absolute value. In particular, in $K$ we have a $p$-adic valuation: for each $x \in K$, $\nu_p(x)$ is the only rational number in $\frac{1}{n}\mathbb{Z}$ satisfying 
$$|x| = p^{-\nu_p(x)}.$$ 

The image of the $p$-adic valuation $\nu_p$ of $K$ is called the \textit{value group} of $K$. We know that for each $n$ there are extensions $K$ of degree $n$ for which the value group is precisely $\frac{1}{n}\mathbb{Z}$ (they are called \textit{totally ramified extensions}). It is noteworthy that the value in \eqref{AV} does not depend on the extension $K$. That is, if the same $x$ is considered as an element of another extension $L/\mathbb{Q}_p$ of degree $m$, we have
$$\sqrt[m]{|N_{L/\mathbb{Q}_p}(x)|_p} = |x| = \sqrt[n]{|N_{K/\mathbb{Q}_p}(x)|_p}.$$ 
In light of these observations, we see that $\overline{\mathbb{Q}}_p/\mathbb{Q}_p$ is an extension of infinite degree, the $p$-adic absolute value of $\mathbb{Q}_p$ extends to $\overline{\mathbb{Q}}_p$ by means of \eqref{AV}, and the value group of $\overline{\mathbb{Q}}_p$ is precisely $\mathbb{Q}$. Now, remember that $\overline{\mathbb{Q}}_p$ is not complete; in fact, $\mathbb{C}_p$ is its completion. That is, each element $\alpha \in \mathbb{C}_p$ is the limit of a Cauchy sequence $(x_n)$ of elements of $\overline{\mathbb{Q}}_p$. In particular, because the absolute value in $\overline{\mathbb{Q}}_p$ is non-archimedean, there exists a natural number $N$ such that $|\alpha| = |x_n|$ for every $n > N$. We conclude that the absolute value on $\mathbb{C}_p$ extends the absolute value on $\mathbb{Q}_p$, $\mathbb{C}_p$ is complete, $\overline{\mathbb{Q}}_p$ is dense in $\mathbb{C}_p$, and the value group of $\mathbb{C}_p$ is precisely $\mathbb{Q}$. Since $\mathbb{Q}$ is dense in $\mathbb{Q}_p$, we have that $\overline{\mathbb{Q}}$ is dense in $\overline{\mathbb{Q}}_p$ and consequently in $\mathbb{C}_p$. Moreover, the value group of $\overline{\mathbb{Q}}$ is also $\mathbb{Q}$.

A \textit{$p$-adic transcendental analytic function} over $\mathbb{C}_p$ is an analytic function $f:\Omega\subseteq \mathbb{C}_p\to \mathbb{C}_p$ such that the only polynomial $P(X,Y)$ in $\mathbb{C}_p[X,Y]$ satisfying 
\[
P(z,f(z))=0 
\]
for all $z \in \Omega$, is the null polynomial.

Many results about transcendental numbers have been extended to $p$-adic transcendental numbers. For instance, Mahler in \cite{mahler1} proved a $p$-adic analogue of Lindemann's theorem: if $\alpha\in\mathbb{C}_p$ is algebraic over $\mathbb{Q}$ and $0<|\alpha|<p^{-1/(p-1)}$, then the value $\exp_p(\alpha)$ of the $p$-adic exponential function is transcendental. Here, we denote by $\exp_p(z)$ the function given by
\[
\exp_p(z)=\sum_{n=0}^{\infty}\frac{z^n}{n!},
\]
which converges for all $z$ in $\{z\in\mathbb{C}_p: |z|<p^{\frac{-1}{p-1}}\}$. Moreover, in \cite{mahler2} he proved a $p$-adic analogue of the Gelfond-Schneider theorem (Hilbert's 7th problem): if $\alpha,\beta\in\mathbb{C}_p$ are algebraic over $\mathbb{Q}$ satisfying $0<|\alpha-1|<p^{-1/(p-1)}$ and $0<|\beta-1|<p^{-1/(p-1)}$, then $\frac{\log_p(\alpha)}{\log_p(\beta)}$ is either rational or transcendental. 

Here, $\log_p(z)$ denotes the function 
\[
\log_p(1+z) = \sum_{n=1}^{\infty}(-1)^{n+1}\frac{z^n}{n},
\] 
and satisfies $\exp_p(\log_p(1+z))=1+z$. We recommend reading \cite{Bugeaud2018,Bugeaud2020,Bugeaud,DePaula,Lelis1,mahler1,mahler2,Ooto} and the references therein for more details on $p$-adic transcendental numbers and functions.

In this paper, we adopt a natural generalization of Weierstrass' definition of $S_f$ in the $p$-adic context: the exceptional set $S_f$ of a $p$-adic transcendental analytic  function $f:\Omega\subset\mathbb{C}_p\to\mathbb{C}_p$ is given by
\[
S_f:=\{\alpha\in\Omega\cap\QQ:f(\alpha)\in\QQ\}.
\]
Moreover, for a set $A\subseteq\C_p$ and a radius of convergence $\rho\in(0,\infty]$, we denote by $A_{\rho}[[z]]$ the set of all power series 
\[
f(z)=\sum_{n=0}^{\infty}a_nz^n
\]
with coefficients in $A$ and radius of convergence $\rho$. In this paper we bring to the $p$-adic context the problem introduced by Mahler about the study of exceptional sets $S_f$ of (now $p$-adic) transcendental analytic functions $f$ with coefficients in some prescribed set $A$. One of the problems we address is the $p$-adic version of Mahler's Problem $C$.
\begin{questionCinCp}
    Let $\rho\in(0,\infty]$ be a radius of convergence. Does there exist for any $S\subseteq\QQ\cap B(0,\rho)$ a $p$-adic transcendental analytic function $f\in\mathbb{Q}_{\rho}[[z]]$ for which $S_f=S$?
\end{questionCinCp}

Naturally, here we must also have $0 \in S$. However, in the $p$-adic case, we need a stronger restriction on the set $S$. We prove that if $f\in\mathbb{Q}_{\rho}[[z]]$ and $\alpha \in S_f$ is such that $[\mathbb{Q}_p(\alpha): \mathbb{Q}_p] > 1$, then $S_f$ must necessarily contain every $\alpha'\in\QQ$ which is $\Q_p$-conjugate of $\alpha$. We then add to Problem $C$ in $\C_p$ the extra hypothesis on closure under algebraic conjugation and  prove the following theorem.

\begin{theorem}\label{theo1}
     Let $\rho\in(0,\infty]$ and let $S$ be a subset of  $\ \overline{\mathbb{Q}}\cap B(0,\rho)$ closed under algebraic conjugation such that $0 \in S$. Then there exist uncountably many $p$-adic transcendental analytic functions $f\in\mathbb{Q}_{\rho}[[z]]$ such that $S_f=S$. Moreover, if $\rho\geq1$, then $f$ can be taken in $\Z_{\rho}[[z]]$.
\end{theorem}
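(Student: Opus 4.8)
\noindent\emph{Proof strategy.}
The plan is to realise $f$ as a lacunary series
\[
f(z)=\sum_{n\ge 1}\epsilon_n\,c_n\,\Pi_n(z)\,z^{l_n},
\]
built from the polynomials that cut out $S$, with $p$-powers $c_n$ and integer exponents $l_n$ chosen by a recursion that pins the radius of convergence at exactly $\rho$, makes the values on $S$ algebraic for the obvious reason, and makes the values off $S$ transcendental by a $p$-adic Liouville argument. First I would put $T:=(\overline{\mathbb{Q}}\cap B(0,\rho))\setminus S$, which is countable, and list $T=\{\beta_1,\beta_2,\dots\}$ (if $T$ is finite or empty only trivial changes are needed). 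Since $S$ is closed under algebraic conjugation, it is a disjoint union of finite $\mathbb{Q}$-Galois orbits $O_1,O_2,\dots$, which I enumerate with $O_1=\{0\}$; let $P_n\in\mathbb{Z}[z]$ be the primitive minimal polynomial of $O_n$, so $P_1(z)=z$, and set $\Pi_n:=\prod_{j\le n}P_j\in\mathbb{Z}[z]$. By Gauss's lemma each $\Pi_n$ is primitive and divisible by $z$ (stabilise $\Pi_n$ from some index on if $S$ has only finitely many orbits).

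Writing $\sigma:=\log_p\rho\in\mathbb{R}\cup\{+\infty\}$, a power series $\sum a_mz^m$ has radius of convergence exactly $\rho$ iff $\liminf_m \nu_p(a_m)/m=\sigma$. In the series above, $\epsilon_n$ ranges over $\{1,q\}$ for a fixed prime $q\ne p$ (the two options have equal $p$-adic valuation and absolute value $1$, and the string $(\epsilon_n)$ will produce the uncountable family), $c_n=p^{e_n}$ with $e_n\in\mathbb{Z}$, and $l_n\in\mathbb{N}$. I choose $e_n$ and $l_n$ recursively, uniformly in the $\epsilon_n$, so that $e_n/l_n\to\sigma$ and, moreover, $|e_n|$ and $l_n$ grow fast enough that, writing $\xi^{(i)}_N:=\sum_{n\le N}\epsilon_nc_n\Pi_n(\beta_i)\beta_i^{\,l_n}$, for every $i\le N$
\[
0<\bigl|f(\beta_i)-\xi^{(i)}_N\bigr|_p=\bigl|\epsilon_{N+1}c_{N+1}\Pi_{N+1}(\beta_i)\beta_i^{\,l_{N+1}}\bigr|_p\le H\bigl(\xi^{(i)}_N\bigr)^{-\kappa_N},\qquad \kappa_N\to\infty .
\]
Since $l_{n+1}$ is taken huge, the degree-blocks $I_n:=[\,l_n+1,\,l_n+\deg\Pi_n\,]$ are pairwise disjoint, so $a_m=0$ off $\bigcup_nI_n$ and $\nu_p(a_m)=e_n+\nu_p\bigl((\Pi_n)_{m-l_n}\bigr)$ on $I_n$; primitivity of $\Pi_n$ then gives $\liminf_m\nu_p(a_m)/m=\sigma$ (as $e_n/l_n\to\sigma$), i.e.\ radius exactly $\rho$. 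When $\rho\ge1$ one has $\sigma\ge0$ and may take all $e_n\ge0$, so every $a_m\in\mathbb{Z}$ and $f\in\mathbb{Z}_\rho[[z]]$; when $\rho<1$ one needs $e_n\to-\infty$ and only obtains $f\in\mathbb{Q}_\rho[[z]]$.

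The verification is then straightforward. If $\alpha\in O_{n_0}\subseteq S$, then $\Pi_n(\alpha)=0$ for $n\ge n_0$, so $f(\alpha)=\sum_{n<n_0}\epsilon_nc_n\Pi_n(\alpha)\alpha^{\,l_n}$ is a finite sum of algebraic numbers, whence $S\subseteq S_f$. If $\beta\in T$, then $\beta\ne0$ and $P_n(\beta)\ne0$ for all $n$, so every term of $f(\beta)$ is nonzero, the displayed inequality produces infinitely many approximations of $f(\beta)$ by elements of the fixed number field $\mathbb{Q}(\beta)$ of exploding quality, and the $p$-adic Liouville inequality forces $f(\beta)$ to be transcendental: if $f(\beta)$ were algebraic then either $H(\xi^{(i)}_N)$ stays bounded, contradicting $\bigl|f(\beta)-\xi^{(i)}_N\bigr|_p\to0$, or it tends to $\infty$ along a subsequence, contradicting $\kappa_N\to\infty$. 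Hence $S_f\cap T=\emptyset$ and $S_f=S$. Finally, distinct strings $(\epsilon_n)$ give distinct such series (the blocks $I_n$ are disjoint and $\Pi_n$ primitive), yielding $2^{\aleph_0}$ functions, all with $S_f=S$ and radius $\rho$ (and in $\mathbb{Z}_\rho[[z]]$ when $\rho\ge1$); a power series with rational coefficients that is algebraic over $\mathbb{C}_p(z)$ is algebraic over $\mathbb{Q}(z)$ — the coefficients of an annihilating polynomial solve a homogeneous $\mathbb{Q}$-linear system, and solvability descends from $\mathbb{C}_p$ — and there are only countably many such power series; so all but countably many of our functions are $p$-adic transcendental analytic functions with exceptional set $S$.

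The main obstacle is the simultaneity required in the recursion when $\rho$ is finite: forcing the tails at the $\beta_i$ to be $p$-adically minuscule tends to make the Taylor coefficients of $f$ highly $p$-divisible, which would push the radius up to $\infty$. The factor $z^{l_n}$ is precisely the device that resolves this: it deposits the $n$-th term into a far-away block of Taylor coefficients, so that for every $\beta\in B(0,\rho)\cap\overline{\mathbb{Q}}$ (for which $\nu_p(\beta)>-\sigma$) one has $\bigl|\epsilon_nc_n\Pi_n(\beta)\beta^{\,l_n}\bigr|_p\le p^{-c_\beta|e_n|+o(|e_n|)}$ with $c_\beta>0$, while $H(\xi^{(i)}_N)\le p^{O(|e_N|)}$ (with $|e_n|$ replaced by $l_n$ in the degenerate case $\sigma=0$, where one keeps $e_n=0$); thus choosing $|e_{n+1}|$, respectively $l_{n+1}$, large relative to $\kappa_n$, to the finitely many $c_{\beta_1},\dots,c_{\beta_n}$, and to all data from earlier stages makes every tail small and strictly dominated by its leading term, while $l_n=\lceil e_n/\sigma\rceil$ (for $\sigma\notin\{0,+\infty\}$) keeps the radius pinned at $\rho$. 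In short, $p$-adic smallness of the correction terms is bought with \emph{degree} rather than with extra divisibility of the coefficients — a trade-off with no analogue in the archimedean setting.
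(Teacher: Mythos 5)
Your proposal is correct, and its skeleton coincides with the paper's: a lacunary series supported on the products $\Pi_n=\prod_{j\le n}P_j$ of the primitive minimal polynomials of the conjugation orbits of $S$ (so that values on $S$ are finite sums, hence algebraic), transcendence off $S$ via the $p$-adic Liouville inequality of Bugeaud--Keke\c{c} combined with height estimates for the partial sums, uncountably many choices at each stage, and the descent/countability lemma for algebraic power series with rational coefficients to discard the (at most countably many) algebraic ones. The genuine difference is the mechanism pinning the radius at $\rho$. The paper keeps its $p$-exponents nonnegative and rapidly increasing, so its interpolating series $h$ is \emph{entire} with integer coefficients, and then adds a second series $g(z)=1+\sum p^{\psi_n}z^{\omega_n}\prod_{k\le n}P_k(z)$ built from \emph{all} of $\overline{\mathbb{Q}}\cap B(0,\rho)$ --- so $g$ sends every algebraic point of the ball to an algebraic number and cannot disturb the exceptional set --- with $\psi_n$ calibrated (negative when $\rho<1$) so that $g$, hence $f=h+g$, has radius exactly $\rho$. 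You instead calibrate the valuations $e_n\approx\sigma l_n$ inside a single series and let primitivity of $\Pi_n$ certify $\liminf_m \nu_p(a_m)/m=\sigma$; this merges the two functions into one at the cost of carrying possibly negative $e_n$ through the Liouville bookkeeping, where smallness of the tails at $\beta\in T$ is purchased by the exponent $l_n$ via $\sigma+\nu_p(\beta)>0$ rather than by extra $p$-divisibility --- exactly the trade-off you identify, and it does close, with integer coefficients whenever $\rho\ge 1$. Two minor inaccuracies that do not affect the argument: the height bound should read $H(\xi^{(i)}_N)\le \exp\bigl(O\bigl(Nr^{N+2}(|e_N|+l_N+\log x_N)\bigr)\bigr)$ with $r=\deg\beta_i$ and $x_N$ bounding the heights $H(\Pi_n(\beta_i))$ (as in the paper's estimate \eqref{ble}), rather than $p^{O(|e_N|)}$ --- it is still determined by stage-$N$ data, and $l_{N+1}$ is chosen afterwards; and your binary marker $\epsilon_n\in\{1,q\}$ plays the role of the paper's freedom in the choice of $(\delta_n,\omega_n)$.
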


We also show that if we loosen the restrictions on the coefficients of $f$ and look for functions in $\overline{\mathbb{Q}}_{\rho}[[z]]$, we can avoid the hypothesis on closure under algebraic conjugation. Specifically, we prove the following theorem.

\begin{theorem}\label{theo2}
Let $\rho\in(0,\infty]$ and  let $S$ be a subset of $\overline{\mathbb{Q}}\cap B(0,\rho)$ such that $0 \in S$. Then there exist uncountably many $p$-adic transcendental analytic functions $f\in\overline{\mathbb{Q}}_{\rho}[[z]]$ such that $S_f=S$.
\end{theorem}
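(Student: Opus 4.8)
\emph{Proof sketch of Theorem \ref{theo2}.} The plan is to realise $S$ as $S_f$ by an explicit lacunary construction. Since $S\subseteq\overline{\mathbb{Q}}$ is countable, enumerate it without repetition as $S=\{\alpha_0=0,\alpha_1,\alpha_2,\dots\}$ and set
\[
Q_n(z)=\prod_{i=0}^{\min\{n,\,\#S-1\}}(z-\alpha_i)\ \in\ \overline{\mathbb{Q}}[z]
\]
(read $\prod_{i=0}^{n}$ when $S$ is infinite; when $S$ is finite, $Q_n$ is eventually the fixed polynomial $\prod_{\alpha\in S}(z-\alpha)$). I will look for $f$ of the shape
\[
f(z)=\sum_{n=0}^{\infty}\lambda_n\,Q_n(z)\,z^{e_n},\qquad \lambda_n=p^{m_n}\in\mathbb{Q},\ m_n\in\mathbb{Z},
\]
with $(e_n)$ a very rapidly increasing sequence of exponents satisfying $e_{n+1}>e_n+\deg Q_n$, so that the terms have pairwise disjoint supports. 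The point of the factors $Q_n$ is that $Q_n(\alpha_i)=0$ as soon as $n\ge i$; hence for every $\alpha\in S$ all but finitely many terms of the series vanish at $\alpha$, so $f(\alpha)$ is a finite $\overline{\mathbb{Q}}$-linear combination of powers of $\alpha$ and of the $\alpha_j$, i.e. $f(\alpha)\in\overline{\mathbb{Q}}$ and $S\subseteq S_f$. Conversely, for $\beta\in T:=(\overline{\mathbb{Q}}\cap B(0,\rho))\setminus S$ (a countable, possibly empty set) one has $\beta\ne 0$ and $Q_n(\beta)\ne 0$ for all $n$, so no term vanishes at $\beta$; the whole game is to choose the parameters so that $f(\beta)$ is transcendental, which will give $S_f\subseteq S$, and so that $f$ is transcendental over $\mathbb{C}_p(z)$ with radius of convergence exactly $\rho$.

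I would fix the parameters $(e_n,m_n)$ by induction on $n$, running in parallel an enumeration $\beta_1,\beta_2,\dots$ of $T$ and attending to $\beta_1,\dots,\beta_{\min\{n,\#T\}}$ at step $n$. Three requirements must be met at once. \emph{(i) Radius:} choosing $m_n$ with $m_n/e_n\to\log_p\rho$ when $\rho<\infty$ (the lower-order contribution of the $p$-adic sizes of the coefficients of $Q_n$, which is $O(n)$, being absorbed by taking $e_n$ large), resp. $m_n/e_n\to+\infty$ when $\rho=\infty$, forces the radius of convergence of $f$ to be exactly $\rho$. \emph{(ii) Transcendence of $f$ as a function:} imposing $e_{n+1}-(e_n+\deg Q_n)\to\infty$ makes $f$ lacunary, with zero-runs of unbounded length; an algebraic power series over a field of characteristic $0$ is $D$-finite, so its coefficient sequence satisfies a linear recurrence with polynomial coefficients whose top-index coefficient is not identically zero, and such a recurrence forces a series with arbitrarily long runs of zero coefficients to be a polynomial — but $f$ has infinitely many nonzero coefficients (the coefficient of $z^{e_n+\deg Q_n}$ is $\lambda_n\ne 0$), so $f$ is transcendental. \emph{(iii) Transcendence of the values $f(\beta)$:} writing $f_n(z)=\sum_{k\le n}\lambda_kQ_k(z)z^{e_k}$, the rapid growth of $e_n$ makes the term sizes strictly decreasing, so the ultrametric inequality gives, for $\beta\in T$,
\[
0<|f(\beta)-f_n(\beta)|_p=\bigl|\lambda_{n+1}Q_{n+1}(\beta)\beta^{e_{n+1}}\bigr|_p,
\]
and since $|\beta|_p<\rho$ this tends to $0$ as $e_{n+1}\to\infty$, at a rate we control. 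At step $n+1$ the degree $d_n:=[\mathbb{Q}(f_n(\beta)):\mathbb{Q}]$ and the Weil height $h_n:=H(f_n(\beta))$ are already determined, so I would pick $e_{n+1}$ large enough (for each of the finitely many relevant $\beta$) that $|f(\beta)-f_n(\beta)|_p<(h_n+2)^{-n^2 d_n}$. If $f(\beta)$ were algebraic of degree $d$ and height $h$, a standard form of the $p$-adic Liouville inequality — a lower bound for $|\theta-\omega|_p$ in terms of the degrees and heights of distinct algebraic numbers $\theta,\omega$, obtained from the product formula — would give $|f(\beta)-f_n(\beta)|_p\ge c^{-dd_n}h^{-dd_n}h_n^{-dd_n}$ for all large $n$; taking $dd_n$-th roots, this contradicts our choice once $n^2>d$ and $h_n$ is large. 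Hence $f(\beta)\notin\overline{\mathbb{Q}}$, and $S_f=S$.

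Finally, at each step $n$ there is a free binary choice in $\lambda_n$ — say $\lambda_n\in\{p^{m_n},\,u\,p^{m_n}\}$ with $u\in\{2,3\}$ chosen coprime to $p$ — which changes none of the absolute-value estimates above but alters the coefficient sequence of $f$; this produces $2^{\aleph_0}$ pairwise distinct functions $f\in\overline{\mathbb{Q}}_{\rho}[[z]]$, each transcendental with $S_f=S$. I expect the genuinely delicate points to be: (a) checking that requirements (i)--(iii) are simultaneously realisable — which works because (i) only constrains the ratio $m_n/e_n$ while leaving $e_n$ itself free to be taken as large as (ii) and (iii) demand; and (b) the two auxiliary facts invoked, namely that a lacunary $D$-finite (in particular, algebraic) power series must be a polynomial, and the precise $p$-adic Liouville inequality needed to handle approximants $f_n(\beta)$ of unbounded degree and height. $\qed$
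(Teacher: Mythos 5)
Your construction is correct, but it takes a genuinely different route from the paper. The paper deduces Theorem \ref{theo2} from a soft interpolation result (Theorem \ref{theo3}): starting from a fixed $g\in\mathbb{Q}_{\rho}[[z]]$ of radius exactly $\rho$, it adds polynomial perturbations $\delta_k z^k\prod_{j\le k}(z-\alpha_j)+\epsilon_k z^{k+1}\prod_{j\le k}(z-\alpha_j)$ with coefficients shrinking like $p^{-k^k}$, using the density of $\overline{\mathbb{Q}}$ in $\mathbb{C}_p$ to put each coefficient in $\overline{\mathbb{Q}}$ and the density of $\mathbb{C}_p\setminus\overline{\mathbb{Q}}$ (the complement of a countable set) to steer $f(\beta)$ directly into the transcendental numbers for each $\beta\notin S$; transcendence of $f$ itself then comes from Corollary \ref{c1} (only countably many algebraic series over a countable field), so only uncountably many --- not all --- of the constructed functions are claimed transcendental. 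You instead build $f$ as an explicit lacunary series $\sum\lambda_n Q_n(z)z^{e_n}$, which is essentially the paper's Theorem \ref{theo1} machinery transplanted to algebraic coefficients: you prove $f(\beta)\notin\overline{\mathbb{Q}}$ by a $p$-adic Liouville inequality against the partial sums, and you prove $f$ transcendental by lacunarity plus the $D$-finiteness of algebraic series. What the paper's route buys is brevity and the more general Theorem \ref{theo3} (arbitrary dense target sets $E_\alpha$) at no extra cost; what your route buys is full explicitness and the stronger conclusion that every function you write down is transcendental with transcendental values off $S$. The genuinely delicate point you flag --- that the approximants $f_n(\beta)$ now have unbounded degree $d_n$ and height $h_n$, unlike in Theorem \ref{theo1} where rational coefficients keep $\deg\gamma_n\le\deg\beta$ --- does work out: the unknown data $(d,h)$ of a putative algebraic value enter the Liouville lower bound of Lemma \ref{Bugeaud}(ii) only through factors like $(d+1)^{-d_n}h^{-d_n}h_n^{-d}$, and dividing logarithms by $d_n$ shows your choice $\lvert f(\beta)-f_n(\beta)\rvert_p<(h_n+2)^{-n^2d_n}$ contradicts that bound as soon as $n^2$ exceeds a quantity depending only on $d$ and $h$; the conjugate case of Lemma \ref{Bugeaud}(i) gives a lower bound independent of $n$ and so can occur only finitely often. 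Two small points to make explicit in a full write-up: the strict decrease of the term sizes at each $\beta_j$ (needed for the ultrametric equality $\lvert f(\beta)-f_n(\beta)\rvert_p=\lvert\lambda_{n+1}Q_{n+1}(\beta)\beta^{e_{n+1}}\rvert_p$) must be imposed inductively for all $\beta_j$ already enumerated at each step, and the radius computation should use that the leading coefficient of $Q_n$ is $1$, so the coefficient of $z^{e_n+\deg Q_n}$ is exactly $\lambda_n$ and realizes the $\limsup$.
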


To finish this introduction, it is appropriate to comment briefly on the quality of these results. We highlight that the proof of Theorem \ref{theo1} is inspired by Marques and Moreira \cite{gugu20}, where they work with transcendental analytic functions with integer coefficients and radius of convergence $\rho = 1$. However, in our case, besides the adjustments to the $p$-adic context, we introduce a new strategy for controlling the radius of convergence $\rho$ and also justify the additional restriction on closure under algebraic conjugation, which imposes some changes on the choice of the auxiliary interpolation functions. In the case of Theorem \ref{theo2}, the proof is similar to the one given by Marques, Moreira, and Ramirez in \cite{ramirez2016} and \cite{gugu20}, which uses a density argument. While they work with $\mathbb{Q}(i)$, which is dense in $\mathbb{C}$, we work with $\overline{\mathbb{Q}}$, which is dense in $\mathbb{C}_p$, and obtain the most general scenario possible regarding $S$.

\section{Analytic Function with Rational Coefficients}

Our first result on the exceptional set of $p$-adic transcendental analytic functions justifies a stronger restriction than in the complex case when the coefficients are rational. More precisely, we have the following proposition.

 \begin{proposition}\label{prop1}
 Let $f\in\Q_{\rho}[[z]]$ be a $p$-adic analytic function with radius of convergence  $\rho\in(0,\infty]$. If $\alpha \in S_f$ is such that $[\mathbb{Q}_p(\alpha): \mathbb{Q}_p] > 1$, then $S_f$ contains every $\alpha'\in\QQ$ which is $\Q_p$-conjugate of $\alpha$.
 \end{proposition}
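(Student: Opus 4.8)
The plan is to exploit the fact that a $\mathbb{Q}_p$-conjugate $\alpha'$ of $\alpha$ is obtained by applying an automorphism of $\overline{\mathbb{Q}}_p/\mathbb{Q}_p$ to $\alpha$, and that such automorphisms are isometries which therefore behave well with respect to the $p$-adic analytic function $f$. First I would recall that if $\alpha,\alpha'\in\overline{\mathbb{Q}}$ are $\mathbb{Q}_p$-conjugate, then there is a continuous field automorphism $\sigma$ of $\overline{\mathbb{Q}}_p$ (indeed of $\mathbb{C}_p$, by continuity and density) fixing $\mathbb{Q}_p$ pointwise with $\sigma(\alpha)=\alpha'$; in particular $|\sigma(x)|=|x|$ for all $x$, so $\sigma$ preserves $B(0,\rho)$ and hence $\alpha'\in B(0,\rho)$, so it makes sense to evaluate $f$ at $\alpha'$. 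Since $\alpha\in S_f$, the value $f(\alpha)$ lies in $\overline{\mathbb{Q}}$.

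The key step is to show $f(\alpha')=\sigma(f(\alpha))$. Because $f=\sum_{n\ge 0}a_n z^n$ has $a_n\in\mathbb{Q}\subseteq\mathbb{Q}_p$, each partial sum satisfies $\sigma\!\left(\sum_{n=0}^{N}a_n\alpha^n\right)=\sum_{n=0}^{N}a_n\sigma(\alpha)^n=\sum_{n=0}^{N}a_n(\alpha')^n$. The series $\sum a_n\alpha^n$ converges in $\mathbb{C}_p$ to $f(\alpha)$; applying the continuous map $\sigma$ and using that $\sigma$ is an isometry (so it commutes with taking limits of Cauchy sequences), the left-hand sides converge to $\sigma(f(\alpha))$, while the right-hand sides converge to $f(\alpha')$ since $|\alpha'|=|\alpha|<\rho$ guarantees convergence of $\sum a_n(\alpha')^n$ to $f(\alpha')$. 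Hence $f(\alpha')=\sigma(f(\alpha))$, which, being the image of an algebraic number under a field automorphism, is again algebraic. Therefore $\alpha'\in\overline{\mathbb{Q}}\cap B(0,\rho)$ with $f(\alpha')\in\overline{\mathbb{Q}}$, i.e.\ $\alpha'\in S_f$.

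The main obstacle I anticipate is the careful justification that a $\mathbb{Q}_p$-conjugation on the algebraic number $\alpha$ extends to a continuous automorphism of $\mathbb{C}_p$ that is an isometry and under which the convergent power series expansion is preserved in the limit; once the identity $\sigma(\sum a_n\alpha^n)=\sum a_n\sigma(\alpha)^n$ is established at the level of partial sums, passing to the limit is the delicate point, and it hinges precisely on the non-archimedean isometry property of $\sigma$ together with the fact that $\sigma$ fixes the rational coefficients $a_n$. The hypothesis $[\mathbb{Q}_p(\alpha):\mathbb{Q}_p]>1$ is what guarantees the existence of a nontrivial such $\sigma$ (equivalently, a genuine $\mathbb{Q}_p$-conjugate $\alpha'\neq\alpha$); when the degree is $1$ the statement is vacuous since $\alpha$ is its own only $\mathbb{Q}_p$-conjugate and already lies in $S_f$.
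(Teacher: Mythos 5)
Your argument is correct, but it follows a genuinely different route from the paper. The paper's proof is ``analytic'': it sets $\beta=f(\alpha)$, forms $f_{\beta}(z)=P_{\beta}(f(z))\in\mathbb{Q}_{\rho}[[z]]$ with $P_{\beta}$ the minimal polynomial of $\beta$ over $\mathbb{Q}$, and applies the $p$-adic Weierstrass Preparation Theorem to factor $f_{\beta}=g_{\beta}h_{\beta}$ with $g_{\beta}\in\mathbb{Q}_p[z]$ a polynomial carrying all the zeros of $f_{\beta}$ in $\overline{B}(0,|\alpha|)$; since $g_{\beta}(\alpha)=0$, the minimal polynomial of $\alpha$ over $\mathbb{Q}_p$ divides $g_{\beta}$, so $f_{\beta}(\alpha')=0$ and $f(\alpha')$ is a conjugate of $\beta$. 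Your proof is ``Galois-theoretic'': you extend the $\mathbb{Q}_p$-conjugation $\alpha\mapsto\alpha'$ to an automorphism $\sigma$ of $\overline{\mathbb{Q}}_p$, note that $\sigma$ is an isometry (by uniqueness of the extension of $|\cdot|_p$ to finite extensions, the same fact the paper invokes via its norm formula to get $|\alpha|=|\alpha'|$), extend it continuously to $\mathbb{C}_p$, and pass $\sigma$ through the convergent series to get $f(\alpha')=\sigma(f(\alpha))$. Both arguments are sound and both yield the slightly stronger conclusion that $f(\alpha')$ is an algebraic conjugate of $f(\alpha)$; yours is arguably more direct and avoids the Weierstrass Preparation Theorem at the cost of invoking the (standard but nontrivial) facts that $\mathrm{Gal}(\overline{\mathbb{Q}}_p/\mathbb{Q}_p)$ acts by isometries and extends to $\mathbb{C}_p$. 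One small simplification available to you: since $\mathbb{Q}_p(\alpha)$ is a finite extension of $\mathbb{Q}_p$ and hence complete, the partial sums already converge to $f(\alpha)$ inside $\mathbb{Q}_p(\alpha)\subseteq\overline{\mathbb{Q}}_p$, so the extension of $\sigma$ to all of $\mathbb{C}_p$ is not actually needed --- continuity of $\sigma$ on $\mathbb{Q}_p(\alpha)$ suffices.
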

 
 \begin{proof} Let $\rho\in(0,\infty]$ be the radius of convergence of the power series 
 \[
 f(z)=\sum_{n=0}^{\infty}a_nz^n
 \]
 with rational coefficients. Let $\alpha\in S_f$ be an algebraic number such that $[\Q_p(\alpha):\Q_p]>1$ and let $\alpha'\in\QQ$ be a $\Q_p$-conjugate of $\alpha$. By \eqref{AV}, we have that
 \[
 |\alpha|=|\alpha'|,
 \]
so $\alpha'\in B(0,\rho)$. 

 Now, suppose that $f(\alpha)=\beta$ and consider $P_{\beta}(z)\in\Q[z]$  the minimal polynomial of $\beta\in\QQ$. Thus, we define the power series 
 \[
 f_{\beta}(z)=P_{\beta}(f(z))=\sum_{n=0}^{\infty}b_nz^n.
 \]
 Since $f(z)\in\Q_{\rho}[[z]]$ and $P_{\beta}(z)\in\Q[z]$, we have that $f_{\beta}(z)\in\Q_{\rho}[[z]]$ and $f_{\beta}(\alpha)=0$. If $c$ is the absolute value of $\alpha$, then $0<c<\rho$ and $|b_n|c^n\to0$ as $n\to\infty$. Let $N$ be the integer number defined by the conditions
 \[
 |b_N|c^N=\max_{n}\{|b_n|c^n\}\quad\mbox{and}\quad |b_n|c^n<|b_N|c^N\quad\forall n>N\in\Z.
 \]
 By the $p$-adic Weierstrass Preparation Theorem \cite{Gouvea}, there exists a polynomial
 \[
 g_{\beta}(z)=c_0+c_1z+\cdots+c_Nz^N \in \Q_p[z]
 \]
 of degree $N$ and a power series
 \[
 h_{\beta}(z) = \sum_{n=0}^{\infty}d_nz^n \in \Q_p[[z]]
 \]
  satisfying
 \[
 f_{\beta}(z)=g_{\beta}(z)h_{\beta}(z),
 \]
 such that  $h_{\beta}(z)$ has no zeros in $\overline{B}(0,c)$. Therefore, $g_{\beta}(\alpha)=0$ and the minimal polynomial $P_{\alpha}(z)\in\Q_p[z]$ of $\alpha$ divides $g_{\beta}(z)$. We then conclude that, $ 0 = f_{\beta}(\alpha')=P_{\beta}(f(\alpha'))$ so $f(\alpha')$ is an algebraic conjugate of $\beta$ and the proof is finished.
 \end{proof}

\begin{remark}   
Note that the above result establishes a negative answer to Mahler's problem $C$  over $\mathbb{C}_p$. For instance, if the integer $1<b\leq p-1$ is not a quadratic residue modulo $p$, then $q(x)=x^2-b$ is irreducible over $\mathbb{Q}_p$. Suppose that $\alpha,\alpha' \in \overline{\mathbb{Q}}_p$ are the distinct roots of $q(x)$. By the above proposition, if $\alpha \in S_f$ for some $f\in\mathbb{Q}_{\rho}[[z]]$ with $\rho>|\alpha|$, then necessarily $\alpha'\in S_f$. Consequently, there does not exist a function $f\in\mathbb{Q}_{\rho}[[z]]$ such that $S_f=\{0,\alpha\}$.
\end{remark}

Now, in order to prove Theorem \ref{theo1}, we need the following auxiliary results. Our first lemma will be used to prove that there are only countably many power series with coefficients in a countable field.

\begin{lemma}
    Let $\mathbb{K}$ be a subfield of $\mathbb{C}_p$ and let $f \in \mathbb{K}_{\rho}[[z]]$ be an algebraic function of degree $n$ over $\mathbb{C}_p[z]$, for some $\rho\in(0,+\infty]$. Then $f$ is an algebraic function of degree $n$ over $\mathbb{K}[z]$.
\end{lemma}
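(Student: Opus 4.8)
The plan is to exploit the minimal polynomial of $f$ over $\mathbb{C}_p[z]$ and show its coefficients must already lie in $\mathbb{K}[z]$, by a Galois-type descent argument adapted to the fact that $\mathbb{C}_p$ is not algebraically closed over $\mathbb{K}$ in the naive sense but $\mathbb{K}$ is a field of definition. Concretely, let $P(z,Y) = Y^n + c_{n-1}(z)Y^{n-1} + \cdots + c_0(z) \in \mathbb{C}_p[z][Y]$ be the monic minimal polynomial of $f$ over $\mathbb{C}_p[z]$, so $P(z,f(z)) = 0$ for all $z$ in the disc of convergence. I would first argue that such a monic minimal polynomial is \emph{unique}: if $P_1$ and $P_2$ were two distinct monic polynomials of the same minimal degree $n$ annihilating $f$, then $P_1 - P_2$ would be a nonzero polynomial of degree $<n$ in $Y$ annihilating $f$, contradicting minimality. (Here I use that $\mathbb{C}_p[z]$ is an integral domain and that an analytic function vanishing on a disc forces the vanishing of each coefficient series, so the substitution map $\mathbb{C}_p[z][Y] \to \mathbb{C}_p[[z]]$, $Y \mapsto f$, has the expected kernel properties.)

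Next I would produce a polynomial with coefficients in $\mathbb{K}[z]$ that annihilates $f$. Since $f \in \mathbb{K}_\rho[[z]]$ is assumed to be algebraic of degree $n$ over $\mathbb{C}_p[z]$, in particular it is algebraic over $\mathbb{K}(z)$; let $Q(z,Y) \in \mathbb{K}[z][Y]$ be a nonzero polynomial of minimal $Y$-degree $m \le$ (a priori only) something, with $Q(z,f(z)) = 0$. Clearing denominators and taking a primitive part, we may take $Q \in \mathbb{K}[z][Y]$. The degree of $f$ over $\mathbb{K}(z)$ is at least its degree over $\mathbb{C}_p(z)$, which is $n$ — wait, it is the reverse that is obvious; so I must be careful. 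The clean route: $Q(z,Y)$, viewed in $\mathbb{C}_p[z][Y]$, is annihilated by $f$, hence $P \mid Q$ in $\mathbb{C}_p(z)[Y]$ (as $P$ is the minimal polynomial over the field $\mathbb{C}_p(z)$ and is irreducible there — irreducibility follows from minimality). Thus $\deg_Y Q \ge n$, and we may as well take $Q$ to be the minimal polynomial of $f$ over $\mathbb{K}(z)$, which is monic and irreducible in $\mathbb{K}(z)[Y]$, of degree $m \ge n$. Now $P \mid Q$ in $\mathbb{C}_p(z)[Y]$; comparing degrees it remains to rule out $m > n$.

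The key step — and the main obstacle — is to show $m = n$, equivalently that $Q$ remains irreducible over $\mathbb{C}_p(z)$, equivalently that $P = Q$. For this I would use the Gauss-lemma / specialization technique: $Q \in \mathbb{K}[z][Y]$ is irreducible over $\mathbb{K}(z)$, and I want irreducibility over $\mathbb{C}_p(z)$. One clean way: factor $Q = \prod_i P_i$ into monic irreducibles over $\mathbb{C}_p(z)$ (by Gauss, the factors can be taken in $\mathbb{C}_p[z][Y]$ up to units); exactly one factor, say $P_1 = P$, has $f$ as a root. The coefficients of $P$ then lie in the ring generated over $\mathbb{K}$ by the coefficients of $Q$ together with the other roots of $Q$ (in an algebraic closure of $\mathbb{C}_p(z)$); but the roots of $Q$ are algebraic over $\mathbb{K}(z)$, so the coefficients of each $P_i$, being elementary symmetric functions of a subset of the roots, are algebraic over $\mathbb{K}(z)$ \emph{and} lie in $\mathbb{C}_p(z)$. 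Here I would invoke that the minimal polynomial $Q$ is separable (characteristic $0$), and that the Galois group of the splitting field of $Q$ over $\mathbb{K}(z)$ acts transitively on the roots of $Q$ (irreducibility), hence acts on the set of irreducible factors $\{P_i\}$ over $\overline{\mathbb{K}(z)}$; since $P$ has coefficients fixed by... — the cleanest finish is: the coefficients of $P$ lie in $\mathbb{C}_p(z)$ and are algebraic over $\mathbb{K}(z)$, so they lie in the algebraic closure of $\mathbb{K}(z)$ inside $\mathbb{C}_p(z)$. Intersecting, if the factorization of $Q$ over $\mathbb{C}_p(z)$ were nontrivial, transitivity of the $\operatorname{Gal}(\overline{\mathbb{K}(z)}/\mathbb{K}(z))$-action on roots would let a suitable automorphism move $P$ to a different factor $P_i$ while fixing $\mathbb{C}_p(z)$ is \emph{not} automatic — so the honest point I need is that $\mathbb{C}_p(z)$ and $\overline{\mathbb{K}(z)}$ are linearly disjoint over $\mathbb{K}(z)$, i.e. that no new algebraic relations over $\mathbb{K}(z)$ are created by passing to $\mathbb{C}_p(z)$.

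I expect this linear-disjointness / "no new algebraic dependence" statement to be the genuine heart of the matter, and I would handle it as follows: it suffices to show that $\mathbb{K}(z)$ is algebraically closed in $\mathbb{C}_p(z)$. But this reduces to showing $\mathbb{K}$ is algebraically closed in $\mathbb{C}_p$, which is false in general (e.g. $\mathbb{K} = \mathbb{Q}$)! So the clean statement above is too strong, and the right formulation must use that $Q$ is the minimal polynomial of the \emph{specific} analytic function $f \in \mathbb{K}_\rho[[z]]$, not an abstract algebraic element. The resolution: factor $Q$ over $\mathbb{C}_p(z)$ and consider the factor $P$ with root $f$; its coefficients $c_j(z)$ are rational functions in $\mathbb{C}_p(z)$ whose Taylor expansions at $0$ (which converge since $f$ does, being symmetric functions of analytic branches, or by the Weierstrass preparation / Puiseux analysis) have coefficients in the field generated over $\mathbb{Q}_p$ by the Taylor coefficients of $f$ and algebraic numbers over that field; but one shows by an explicit symmetric-function computation that since $f$ itself has coefficients in $\mathbb{K}$, and $P$ is \emph{the} minimal polynomial uniquely determined by $f$, the Galois-descent argument over $\mathbb{K}$ (using $\operatorname{Gal}(\overline{\mathbb{K}}/\mathbb{K})$ acting continuously on power-series coefficients, fixing the coefficients of $f$ hence fixing $P$ by uniqueness of the minimal polynomial) forces $c_j(z) \in \overline{\mathbb{K}} \cap \mathbb{C}_p$-coefficients, and then minimality over $\mathbb{K}(z)$ combined with $\deg_Y P = n$ and $P \mid Q$ yields $m \le n$, hence $m = n$. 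I would present this as: apply continuous automorphisms $\sigma \in \operatorname{Gal}(\overline{\mathbb{Q}_p}/\mathbb{Q}_p)$ (or rather work with a field of definition argument) to the relation $P(z,f(z)) = 0$; since $\sigma$ fixes the $\mathbb{K}$-coefficients of $f$ it fixes $f$, so $\sigma P$ also annihilates $f$ and is monic of degree $n$, whence $\sigma P = P$ by uniqueness; therefore the coefficients of $P$ are fixed by all such $\sigma$ and hence lie in $\mathbb{K}_\rho[[z]]$ — but they are rational functions, so in $\mathbb{K}(z)$. This exhibits a degree-$n$ annihilating polynomial over $\mathbb{K}[z]$ (clear denominators), proving $f$ is algebraic of degree $\le n$ over $\mathbb{K}[z]$; the reverse inequality is immediate since $\mathbb{K} \subseteq \mathbb{C}_p$. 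Thus the degree is exactly $n$, completing the proof.
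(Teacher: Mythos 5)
Your proposal heads toward a Galois-descent argument, and the place where it breaks is exactly the place you flag as "the honest point": the descent from $\mathbb{C}_p$ to $\mathbb{K}$. The group you ultimately invoke, the continuous automorphisms $\operatorname{Gal}(\overline{\mathbb{Q}_p}/\mathbb{Q}_p)$, does not work: for a general subfield $\mathbb{K}\subseteq\mathbb{C}_p$ these automorphisms need not fix $\mathbb{K}$ pointwise (so they need not fix $f$, and the identity $\sigma P=P$ is unavailable), and even in the case $\mathbb{K}=\mathbb{Q}$, where they do fix $\mathbb{K}$, their fixed field is $\mathbb{Q}_p$, not $\mathbb{Q}$ --- so concluding "the coefficients of $P$ are fixed by all such $\sigma$" only places them in $\mathbb{Q}_p(z)$, which is not what the lemma asserts. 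To make descent work you would need \emph{abstract} (necessarily discontinuous) automorphisms of $\mathbb{C}_p$ over $\mathbb{K}$ whose common fixed field is exactly $\mathbb{K}$; this is true for an algebraically closed field in characteristic $0$, but it is a nontrivial fact requiring transcendence bases and the axiom of choice, and you would additionally have to observe that the relation $P(z,f(z))=0$ on a disc is equivalent to the vanishing of countably many \emph{algebraic} expressions in the coefficients, so that it is preserved by discontinuous automorphisms. None of this is carried out, and the paragraph as written cycles through several abandoned strategies (algebraic closedness of $\mathbb{K}(z)$ in $\mathbb{C}_p(z)$, which you correctly note is false; linear disjointness; symmetric functions of branches) without completing any of them. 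A secondary issue: the paper's "degree $n$" is the total degree of $P(X,Y)$, while your minimal-polynomial argument controls only the $Y$-degree.

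The paper avoids all of this with elementary linear algebra. Writing $P(X,Y)=\sum c_{i,j}X^iY^j$ and expanding $P(z,f(z))=\sum_k A_kz^k=0$, each $A_k$ is a \emph{linear} form in the unknowns $c_{i,j}$ with coefficients in $\mathbb{K}$ (built from the coefficients $a_k\in\mathbb{K}$ of $f$). A homogeneous linear system with coefficients in $\mathbb{K}$ that has a nontrivial solution over the extension field $\mathbb{C}_p$ has one over $\mathbb{K}$ (the rank is the same over both fields), which immediately produces $\overline{P}\in\mathbb{K}[X,Y]$ of total degree at most $n$ annihilating $f$; minimality over $\mathbb{C}_p[z]$ gives the reverse inequality. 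If you want to salvage your approach, the linear-algebra observation is the fix: it replaces the entire descent apparatus.
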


\begin{proof}
If $f \in \mathbb{K}_{\rho}[[z]]$ is an algebraic function of degree $n$ over $\mathbb{C}_p[z]$ given by
\[
    f(z) = \sum_{k=0}^{\infty} a_k z^k,
\]
then there exists a non-zero polynomial $P(X,Y) = \sum c_{i,j} X^i Y^j \in \mathbb{C}_p[X,Y]$ of degree $n$ such that $P(z, f(z)) = 0$ for all $z \in B(0, \rho)$. Thus, we have
\[
    \sum_{0 \leq i+j \leq n} c_{i,j} z^i \left( \sum_{k=0}^{\infty} a_k z^k \right)^j = 0
\]
for all $z \in B(0, \rho)$. From here, expanding the powers, switching the order of summation, and rearranging the terms, we obtain
\[
    \sum_{k=0}^{\infty} A_k z^k = 0
\]
for all $z \in B(0, \rho)$, where each $A_k$ is a linear combination with coefficients in $\mathbb{K}$ of the $c_{i,j}$ for $0 \leq i+j \leq n$ . Since a power series is identically zero if and only if all its coefficients are zero, we have $A_k = 0$ for all $k \geq 0$. For each $k \geq 0$, denote by
\[
    A_k(x_{0,0}, \ldots, x_{0,n}) = 0
\]
the linear equation with coefficients in $\mathbb{K}$ obtained by replacing $c_{i,j}$ with the variable $x_{i,j}$. Of these infinite linear equations, at most $(n+1)(n+2)/2$ can be linearly independent, and they admit a non-trivial solution ($x_{i,j} = c_{i,j}$) over $\mathbb{C}_p$. Since the coefficients of each equation are in the field $\mathbb{K}$, the system also admits a non-trivial solution $x_{i,j} = \overline{c}_{i,j}$ such that $\overline{c}_{i,j} \in \mathbb{K}$ for all $0 \leq i+j \leq n$. Consequently,
\[
    \sum_{0 \leq i+j \leq n} \overline{c}_{i,j} z^i \left( \sum_{k=0}^{\infty} a_k z^k \right)^j = 0
\]
for all $z \in B(0, \rho)$. Taking $\overline{P}(X,Y) = \sum \overline{c}_{i,j} X^i Y^j \in \mathbb{K}[X,Y]$, we obtain that $\overline{P}(z,f(z))=0$ for all $z\in B(0,\rho)$. Since $\mathbb{K}$ is a subfield of $\C_p$ and $f$ is an algebraic function of degree $n$ over $\mathbb{C}_p[z]$, we have that the degree of $\overline{P}(X,Y)$ is greater than or equal to $n$. On the other hand, by construction, the degree of $\overline{P}(X,Y)$ is less than or equal to $n$. Hence, $f$ is an algebraic function of degree $n$ over $\mathbb{K}[z]$.
      \end{proof}

The lemma above is a $p$-adic version of a particular case of the result found in \cite{Mahler3}, which can be proved in its most general form involving a fixed number of derivatives of the function $f$ and for any field extension  $\mathbb{K}_0 \subseteq \mathbb{K}$ of characteristic 0. As a consequence of the above lemma, we obtain the following corollary.

\begin{corollary}\label{c1}
The set of power series with coefficients in a countable subfield of $\mathbb{C}_p$ with radius of convergence $\rho \in (0, +\infty]$ which are algebraic is countable.
\end{corollary}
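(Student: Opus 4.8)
The plan is to deduce the corollary from the lemma together with a bare-hands cardinality count. Fix a countable subfield $\K \subseteq \C_p$ and a radius $\rho \in (0, +\infty]$. Suppose $f \in \K_{\rho}[[z]]$ is algebraic over $\C_p[z]$, say of degree $n$. By the lemma, $f$ is then algebraic of degree $n$ over $\K[z]$, so there is a non-zero $P(X,Y) \in \K[X,Y]$ with $P(z, f(z)) = 0$ for all $z \in B(0,\rho)$; equivalently, $P(z,f(z))$ vanishes identically as a formal power series. Thus every algebraic element of $\K_{\rho}[[z]]$ is killed by some non-zero polynomial in $\K[X,Y]$.

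Next I would note that $\K[X,Y]$ is countable, since a polynomial is determined by its finitely many coefficients and $\K$ is countable. It therefore suffices to show that, for each fixed non-zero $P \in \K[X,Y]$, there are only finitely many power series $g \in \C_p[[z]]$ with $P(z, g(z)) = 0$; the set of algebraic elements of $\K_{\rho}[[z]]$ would then be contained in a countable union of finite sets and hence be countable.

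For the finiteness claim, write $P(X,Y) = \sum_{j=0}^{d} p_j(X) Y^j$ with $p_d \neq 0$. If $d = 0$, then $P(z,g(z)) = p_0(z) = 0$ forces $p_0 = 0$, contradicting $P \neq 0$; so $d \geq 1$. Since $\C_p[[z]]$ is an integral domain whose fraction field contains the field $\C_p((z))$ of formal Laurent series, the polynomial $P(z, Y)$, viewed in $\C_p((z))[Y]$, has degree $d \geq 1$ in $Y$ and hence at most $d$ roots there; in particular it has at most $d$ roots in $\C_p[[z]]$. This is the required bound.

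Putting these together, the set of algebraic functions in $\K_{\rho}[[z]]$ is contained in $\bigcup_{0 \neq P \in \K[X,Y]} \{ g \in \C_p[[z]] : P(z,g(z)) = 0 \}$, a countable union of finite sets, and is therefore countable. I do not anticipate a genuine obstacle: the only step that needs a word of justification is the finiteness of the solution set for a fixed $P$, which is just the elementary fact that a degree-$d$ polynomial over a field has at most $d$ roots, applied over the Laurent series field $\C_p((z))$.
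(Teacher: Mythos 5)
Your argument is correct and is essentially the derivation the paper intends: the paper states the corollary as an immediate consequence of the lemma without writing out details, and the standard filling-in is exactly your count — countably many non-zero $P\in\mathbb{K}[X,Y]$, each with at most $\deg_Y P$ roots in $\mathbb{C}_p[[z]]$ (viewing $P(z,Y)$ over the field $\mathbb{C}_p((z))$), giving a countable union of finite sets.
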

    
Our next auxiliary result, proved by Bugeaud and Kekeç in \cite{Bugeaud}, is a \textit{Liouville's inequality} for $p$-adic numbers that are algebraic over $\mathbb{Q}$. To this end, we recall that for an integer polynomial $P(x)$, its height, denoted by $H(P)$, is the maximum of the usual absolute values of its coefficients, and its degree is denoted by $\deg(P)$. For $\alpha \in \overline{\mathbb{Q}}$, the height $H(\alpha)$ and the degree $\deg(\alpha)$ are the height and the degree of the minimal defining polynomial of $\alpha$ over $\mathbb{Z}$.

\begin{lemma}\label{Bugeaud}
     Let $\alpha,\beta\in\C_p$ be distinct algebraic numbers over $\Q$.
    \begin{enumerate}[(i)]
        \item If $\alpha$ and $\beta$ are algebraic conjugates such that $\deg(\alpha)=\deg(\beta)=n$ and $H(\alpha)=H(\beta)=H$, then
        \[
        |\alpha-\beta|\geq 2^{-3n/2}n^{-5n/2}H^{-2n}.
         \]
        \item If $\alpha$ and $\beta$ are not algebraic conjugates of degree $n$ and $m$, respectively, then
        \[
        |\alpha-\beta|\geq (n+1)^{-m}(m+1)^{-n}H(\alpha)^{-m}H(\beta)^{-n}\max\{1,|\alpha|\}\max\{1,|\beta|\}.
        \]
    \end{enumerate}
\end{lemma}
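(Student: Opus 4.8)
The plan is to derive both inequalities by the classical resultant/discriminant method, transported to the $p$-adic absolute value through the product formula for $\mathbb{Q}$. First I would fix an embedding of $\overline{\mathbb{Q}}$ into $\mathbb{C}_p$, so that every algebraic number has a well-defined $p$-adic absolute value, and use the Gauss norm $\|\sum c_i x^i\|_p := \max_i |c_i|_p$ on $\mathbb{C}_p[x]$. Two standard facts do most of the work. First, the Gauss norm is multiplicative on $\mathbb{C}_p[x]$ and $\|x-\gamma\|_p = \max\{1,|\gamma|\}$; hence if $R(x) = r_d\prod_k(x-\gamma_k) \in \mathbb{Z}[x]$ is primitive then $\|R\|_p = 1$ (some coefficient is a $p$-adic unit), so $|r_d|_p \prod_k \max\{1,|\gamma_k|\} = 1$ and in particular $\prod_k \max\{1,|\gamma_k|\} = |r_d|_p^{-1} \le |r_d| \le H(R)$. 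Second, for a nonzero rational integer $R$ one has $|R|_p \ge |R|_\infty^{-1}$, since $\prod_v |R|_v = 1$ over all places of $\mathbb{Q}$ and $|R|_\ell \le 1$ at every finite place $\ell$.

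For part (ii), let $P,Q \in \mathbb{Z}[x]$ be the (primitive) minimal polynomials of $\alpha$ and $\beta$, of degrees $n,m$ and leading coefficients $p_n,q_m$. Since $\alpha$ and $\beta$ are not algebraic conjugates, $P$ and $Q$ are distinct irreducible polynomials, hence coprime, so $R := \mathrm{Res}(P,Q)$ is a nonzero integer. Writing $\alpha = \alpha_1,\dots,\alpha_n$ and $\beta = \beta_1,\dots,\beta_m$ for the roots in $\mathbb{C}_p$, the resultant factors as $R = p_n^m q_m^n \prod_{i,j}(\alpha_i - \beta_j)$. Hadamard's inequality applied to the $(n+m)\times(n+m)$ Sylvester matrix gives $|R|_\infty \le (n+1)^{m/2}(m+1)^{n/2} H(\alpha)^m H(\beta)^n$, hence $|R|_p \ge (n+1)^{-m/2}(m+1)^{-n/2} H(\alpha)^{-m} H(\beta)^{-n}$. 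On the other hand, from $|R|_p = |p_n|_p^m |q_m|_p^n \prod_{i,j}|\alpha_i-\beta_j|$, the ultrametric bounds $|\alpha_i-\beta_j| \le \max\{1,|\alpha_i|\}\max\{1,|\beta_j|\}$, and the first fact above applied to $P$ and to $Q$, one checks that all the factors with $(i,j)\ne(1,1)$ together with the surviving powers of $p_n$ and $q_m$ collapse and leave exactly $|\alpha-\beta| \ge |R|_p \max\{1,|\alpha|\}\max\{1,|\beta|\}$. Combining with the lower bound for $|R|_p$ yields the inequality of part (ii).

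For part (i), $\alpha$ and $\beta$ are two roots $\alpha_1,\alpha_2$ of the same (primitive) minimal polynomial $P$ of degree $n$, leading coefficient $p_n$ and height $H$ (in particular $n\ge 2$). Here the relevant integer is the discriminant $D := \mathrm{disc}(P) = (-1)^{n(n-1)/2} p_n^{-1}\,\mathrm{Res}(P,P') \in \mathbb{Z}\setminus\{0\}$, which is nonzero because $P$ is separable. Hadamard's inequality on the Sylvester matrix of $P$ and $P'$, or Mahler's bound $|D|_\infty \le n^n M(P)^{2n-2}$ together with $M(P) \le \sqrt{n+1}\,H$, yields an upper bound for $|D|_\infty$ of the form $C(n)\,H^{2n-2}$ with $C(n)$ depending only on $n$, hence a lower bound for $|D|_p$. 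Then, exactly as in part (ii), writing $D = p_n^{2n-2}\prod_{i<j}(\alpha_i-\alpha_j)^2$ and isolating the single factor $(\alpha_1-\alpha_2)^2$ by means of the Gauss-norm identity, one obtains $|\alpha-\beta| \ge |D|_p^{1/2} \max\{1,|\alpha|\}\max\{1,|\beta|\} \ge |D|_p^{1/2}$, and the bound of part (i) follows.

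The one delicate point, and the step I would take most care with, is the bookkeeping in the last move of each case: passing from the product of all pairwise differences, which is what the resultant or discriminant controls, to the single difference $|\alpha-\beta|$, while keeping the stray powers of $p_n$ and $q_m$ in check. This is precisely where the $p$-adic Gauss-norm identity $|r_d|_p\prod_k\max\{1,|\gamma_k|\} = 1$ for primitive integer polynomials does its job, and it is also what produces the factors $\max\{1,|\alpha|\}$ and $\max\{1,|\beta|\}$ on the right-hand side. Everything else is a direct application of Hadamard's inequality and the product formula; carried out carefully, the argument even gives slightly sharper numerical exponents than those stated.
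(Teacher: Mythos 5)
The paper does not prove this lemma at all: it is quoted verbatim from Bugeaud and Kekeç \cite{Bugeaud} as a known $p$-adic Liouville inequality. Your argument — resultant/discriminant as a nonzero integer, Hadamard/Landau–Mahler bounds at the archimedean place, the product formula to pass to $|\cdot|_p$, and the Gauss-norm identity $|r_d|_p\prod_k\max\{1,|\gamma_k|\}=1$ for primitive integer polynomials to isolate the single factor $|\alpha-\beta|$ — is the standard proof of such inequalities and is essentially the route taken in the cited source; I checked the bookkeeping in both cases and it closes correctly, indeed with the sharper exponents $-m/2$, $-n/2$ in part (ii) and $H^{-(n-1)}$ in part (i), which dominate the stated bounds since $H\geq 1$.
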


The lemma below is well-known and allows us to upper bound the height of the image of algebraic numbers by polynomials. Its proof can be found in \cite{gugu20}.

\begin{lemma}\label{HeightPoly}
Let $y_1, \ldots, y_k$ be algebraic numbers of degrees $m_1, \ldots, m_k$, respectively. Then the following inequalities hold:
\begin{enumerate}[(i)]
\item $H(y_1^n) \leq e^{O(n)}H(y_1)^n$ for all $n \geq 1$;
\item $H(y_1 y_2) \leq e^{O(1)}(H(y_1)H(y_2))^{m_1 m_2}$;
\item $H(y_1 + \cdots + y_k) \leq e^{O(k)}(H(y_1) \cdots H(y_k))^{m_1 \cdots m_k}$.
\end{enumerate}
\end{lemma}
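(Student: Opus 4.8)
The plan is to route everything through the (multiplicative) Mahler measure $M(\cdot)$ and the absolute logarithmic Weil height $h(\alpha)=\frac{1}{\deg\alpha}\log M(\alpha)$, for which products, powers and sums behave transparently, and only at the end translate back to the naive height $H$. The two facts I would recall first are the standard comparison inequalities: for $\alpha\in\QQ$ of degree $d$ one has $H(\alpha)\leq 2^{d}M(\alpha)$ (Mahler's bound on the coefficients of a polynomial in terms of its measure, together with $\binom{d}{\lfloor d/2\rfloor}\leq 2^{d}$) and $M(\alpha)\leq\sqrt{d+1}\,H(\alpha)$ (Landau's inequality $M(P)\leq\|P\|_2$). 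In logarithmic form these become
\[
\log H(\alpha)\leq d\log 2+d\,h(\alpha)
\qquad\text{and}\qquad
h(\alpha)\leq\tfrac{1}{d}\log H(\alpha)+\tfrac{1}{2d}\log(d+1).
\]

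Next I would invoke the well-known functoriality of the Weil height: $h(y^{n})=n\,h(y)$, $h(y_1y_2)\leq h(y_1)+h(y_2)$, and $h(y_1+\cdots+y_k)\leq h(y_1)+\cdots+h(y_k)+(k-1)\log 2$. Combining these with the comparison inequalities and the elementary degree bounds $\deg(y^{n})\leq\deg y$, $\deg(y_1y_2)\leq\deg(y_1)\deg(y_2)$ and $\deg(y_1+\cdots+y_k)\leq\deg(y_1)\cdots\deg(y_k)$ yields all three estimates. For (ii), writing $d_i=\deg y_i$ and $d=\deg(y_1y_2)\leq d_1d_2$, one has
\[
\log H(y_1y_2)\leq d\log 2+d\,h(y_1y_2)\leq d\log 2+d\,h(y_1)+d\,h(y_2),
\]
and $d\,h(y_i)\leq\frac{d}{d_i}\log H(y_i)+\frac{d}{2d_i}\log(d_i+1)\leq d_{3-i}\log H(y_i)+O(1)$ because $d/d_i\leq d_{3-i}$; since $H(y_i)\geq1$ one may enlarge the exponent $d_{3-i}$ up to $d_1d_2$, which gives $H(y_1y_2)\leq e^{O(1)}(H(y_1)H(y_2))^{m_1m_2}$. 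Parts (i) and (iii) come out of the same computation: in (i) the leftover factor $2^{d_1}(d_1+1)^{n/2}$ is $e^{O(n)}$ as $d_1$ is fixed, and the exponent on $H(y_1)$ stays equal to $n$; in (iii) the accumulated $2^{dk}$ and the $k$ comparison losses are absorbed into $e^{O(k)}$, while each exponent $\prod_{j\neq i}m_j$ is enlarged to $m_1\cdots m_k$ using $H(y_i)\geq1$.

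The only genuinely delicate point is the bookkeeping with the $O$-symbols: the implied constants are allowed to depend on the fixed degrees $m_i$ (so that factors like $(d_i+1)^{1/2}$ or $2^{d_i}$ do no harm), while the dependence on the heights $H(y_i)$ and on the parameters $n$ and $k$ is tracked explicitly, and one uses $H(y_i)\geq1$ repeatedly to inflate exponents. No idea beyond the standard Mahler-measure and Weil-height toolbox is needed, so I expect this to be a routine verification rather than the crux of the argument.
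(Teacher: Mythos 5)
The paper does not actually prove this lemma --- it is quoted from \cite{gugu20} --- so your proposal can only be measured against the standard argument, which is indeed the Mahler-measure/Weil-height route you describe. Parts (i) and (ii) come out correctly from your computation: the leftover factors $2^{m_1}(m_1+1)^{n/2}$ and $2^{m_1m_2}(m_1+1)^{m_2/2}(m_2+1)^{m_1/2}$ are legitimately absorbed into $e^{O(n)}$ and $e^{O(1)}$ once one agrees, as you state, that the implied constants may depend on the degrees of the individual numbers involved.

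Part (iii), however, has a genuine gap. Converting $h$ back to $\log H$ costs $d\log 2$ with $d=\deg(y_1+\cdots+y_k)$, and iterating the two-term sum inequality costs another $d(k-1)\log 2$; you claim the resulting $2^{dk}$ is ``absorbed into $e^{O(k)}$'', but $d$ can be as large as $m_1\cdots m_k$, which is not bounded independently of $k$ (even if each $m_i\le r$ is fixed, $d$ can be of size $r^k$). What your computation actually yields is $H(y_1+\cdots+y_k)\le e^{O(k\,m_1\cdots m_k)}(H(y_1)\cdots H(y_k))^{m_1\cdots m_k}$. This is not merely a bookkeeping slip: with uniform constants the stated bound is false already for $k=2$ --- take $y_1=\zeta_p$, $y_2=\zeta_q$ roots of unity of large distinct prime orders, so that $H(y_1)=H(y_2)=1$ while $H(\zeta_p+\zeta_q)\ge M(\zeta_p+\zeta_q)/\sqrt{d+1}$ grows like $e^{cd}$ with $d=(p-1)(q-1)$, since a positive proportion of the conjugates $\zeta_p^a+\zeta_q^b$ have absolute value bounded away from $1$. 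The clean $e^{O(k)}$ form does hold when all the $y_i$ lie in a fixed number field, so that $d$ stays bounded --- which is exactly the situation in the paper, where every summand of $\gamma_n$ lies in $\Q(\beta)$ of degree $r$; and in any case even the weaker factor $e^{O(k\,m_1\cdots m_k)}$ would be harmless in \eqref{ble}, where it is dominated by $e^{O(n\delta_n r^{n+2})}$. So you should either add the hypothesis that the $y_i$ generate a field of bounded degree, or state (iii) with the error factor your argument actually produces.
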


\subsection*{Proof of Theorem \ref{theo1}} The demonstration consists in showing that there are uncountably many power series $h(z)$ and $g(z)$ such that $h(z)$ is a transcendental entire function with integer coefficients and $S_h = S$, and $g(z)$ has a radius of convergence $\rho$, rational coefficients, and $g(\overline{\mathbb{Q}} \cap B(0, \rho)) \subseteq \overline{\mathbb{Q}}$. In this way, since there exist only countably many algebraic power series in $\mathbb{Q}_{\rho}[[z]]$ (see Corollary \ref{c1}), we get uncountably many transcendental power series $f(z) = g(z) + h(z)$ such that $f \in \mathbb{Q}_{\rho}[[z]]$ and $S_f = S$. Furthermore, we will show that if $\rho \geq 1$, then $g(z)$ can be constructed with integer coefficients, and hence $f(z)$ too.

We first construct a transcendental entire function $h$ such that $S_h = S$. Let $S$ be a subset of $\overline{\mathbb{Q}}$ which is closed under algebraic conjugation and contains $0$. We define the set
\[
\mathcal{P}_S = \{P_1(z), P_2(z), \ldots\} \subseteq \mathbb{Z}[z]
\]
of all irreducible polynomials with coprime coefficients of the algebraic numbers in $S$, such that $P_1(z) = z$. We also set $\overline{\mathbb{Q}} \setminus S = \{\beta_1, \beta_2, \ldots\}$ and define for each $n \in \mathbb{N}$
\begin{equation}\label{xn}
x_n = \max_{1 \leq i \leq n}\max_{1 \leq k \leq n} \{H(P_1(\beta_i)\cdots P_k(\beta_i))\}.
\end{equation}

Our goal is to prove that there exist uncountably many sequences $(\omega_n)_{n\geq 1}$ and $(\delta_n)_{n\geq 1}$ of positive integers such that the function
\begin{equation}\label{function}
    h(z)=1+\sum_{n=1}^{\infty}p^{\delta_n}z^{\omega_n}\prod_{k=1}^nP_k(z)
\end{equation}
is a $p$-adic entire function with exceptional set $S_f = S$. We choose these sequences  so  that
\begin{equation}\label{omega}
    \omega_n \geq \omega_{n-1}+ \sum _{k=1}^{n}\deg (P_k(z))
\end{equation}
for all $n\geq 1$, and (see \eqref{xn})
\begin{equation}\label{delta}
\delta_n\geq \omega_n^{n}\cdot \max\{\delta_{n-1}, \lceil \log x_{n-1} \rceil\}
\end{equation}
for all $n\geq 1$. Therefore, we have that
\[
\left|z^{\omega_n}\prod_{k=1}^nP_k(z)\right|\leq \max\{|z|^{2\omega_n}, \ 1\}
\]
for all $z\in \C_p$. By \eqref{delta}, we obtain that
\[
\left|p^{\delta_n}z^{\omega_n}\prod_{k=1}^nP_k(z)\right|\to 0
\]
as $n\to \infty$. Thus, we have that \eqref{function} defines an entire function with integer coefficients. Furthermore, for each $n$, we can choose $\delta_n$ and $\omega_n$ in more than one way, so the number of functions as in \eqref{function} is uncountable. Then, it follows from Corollary \ref{c1} that uncountably many of these functions are transcendental. It remains to be proved that each of these functions has $S$ as its exceptional set.

Note that $h(\alpha)\in\Q(\alpha)\subseteq\QQ$ for all $\alpha\in S$. Indeed, if $\alpha\in S$, then there is a polynomial $P_N(z)\in \mathcal{P}_S$ such that $P_N(\alpha)=0$, so
\[
h(\alpha)=1+\sum_{n=1}^{N-1}p^{\delta_n}\alpha^{\omega_n}\prod_{k=1}^nP_k(\alpha)\in \Q(\alpha)\subseteq \QQ.
\]

Next, we prove that $h(\beta)\in \C_p\backslash \QQ$ for all $\beta\in \QQ\backslash S$. Assume $\beta = \beta_j$ has degree $r$ and, aiming at a contradiction, admit $h(\beta)$  also algebraic with degree $l$. For each $n \in \N$, we define $\gamma_n=\gamma_n(\beta)$ the algebraic number given by
\[
\gamma_n=1+\sum_{k=1}^{n}p^{\delta_k}\beta^{\omega_k}\prod_{j=1}^kP_j(\beta)\in \Q(\beta).
\]
In particular, $\deg(\gamma_n)\leq r$ for all $n\geq1$. Moreover, note that if it were $\gamma_n = \gamma_{n+1}$ we would get $$\prod_{j=1}^{n+1}P_j(\beta) = 0,$$ which can not happen, since $S$ is closed under algebraic conjugation and $\beta \not \in S$. Therefore, we have $\gamma_n \neq \gamma_{n+1}$ for all $n \in \N$ and, as an immediate consequence, we have $h(\beta) \neq \gamma_n$ for infinitely many $n \in \N$. In particular,  it follows directly from Lemma \ref{Bugeaud} that
\begin{equation}\label{bla}
    |h(\beta) - \gamma_n| \gg H(\gamma_n)^{-l}
\end{equation} for infinitely many $n \in \N$. Furthermore, we apply Lemma \ref{HeightPoly} and get the following upper bound for $H(\gamma_n)$:
\begin{gather}
\begin{aligned}\label{ble}
     H(\gamma_n) & \leq e^{O(n)}\cdot \left ( H(p^{\delta_1}\beta^{\omega_1}P_1(\beta))\cdots H\left(p^{\delta_n}\beta^{\omega_n}\prod_{j=1}^{n}P_j(\beta)\right)\right)^{r^n}\\
      & \leq e^{O(n)}\left( e^{O(n)} H(p^{\delta_1}\beta^{\omega_1})\cdots H(p^{\delta_n}\beta^{\omega_n})x_n^n \right )^{r^{n+2}}\\
      & \leq e^{O(n)}\cdot \left( e^{O(n)}\cdot p^{O(n\delta_n)}\cdot H(\beta)^{O(n\omega_n)}\cdot e^{O(n\omega_n)} \cdot x_n^n\right)^{r^{n+2}}\\
      & \leq e^{O(n)}\cdot (e^{n\delta_n}x_n^n)^{r^{n+2}}\\
      & \leq e^{O(n\delta_n r^{n+2})}x_n^{nr^{n+2}}.
 \end{aligned}
\end{gather} for all $n > j$ and $x_n$ is as in \eqref{xn}. Then, we conclude from \eqref{bla} and \eqref{ble} that 
\begin{equation}\label{bli}
|h(\beta) - \gamma_n| \gg e^{-O(l\cdot n\delta_n r^{n+2})}x_n^{-l\cdot nr^{n+2}}.
\end{equation}

On the other hand, for all $n$ sufficiently large, we have
\begin{gather}\label{blo}
    \begin{aligned}
        |h(\beta) - \gamma_n| & = \bigg|\sum_{k > n}p^{\delta_k}\beta^{\omega_k}\prod_{j=1}^kP_j(\beta)\bigg|\\
        &\leq \max_{k>n}\left \{\bigg| p^{\delta_k}\beta^{\omega_k}\prod_{j=1}^kP_j(\beta)\bigg|\right\}\\
        & \leq \max_{k> n}\{|p^{\delta_k}||\beta|^{2\omega_k}\}\\
        &= |p^{\delta_{n+1}}||\beta|^{2\omega_{n+1}}.
    \end{aligned}
\end{gather}

We conclude from equations \eqref{bli} and \eqref{blo} that 
\[|p^{\delta_{n+1}}||\beta|^{2\omega_{n+1}} \gg e^{-O(l\cdot n\delta_n r^{n+2})}x_n^{-l\cdot nr^{n+2}},\] and a little bit of algebra allows us to conclude that
\begin{equation}\label{blu}
    \frac{\delta_{n+1}}{n r^{n+2} (\delta_n + \ln x_n)} \ll \frac{-l}{\ln p}  - \frac{2\omega_{n+1} \ln |\beta|}{n r^{n+2} (\delta_n + \ln x_n) \ln p},
\end{equation}
which is a contradiction since by \eqref{delta} the left-hand side goes to infinity while the right-hand side tends to $-l/\ln p$. Hence, $h(\beta)$ is transcendental as we wished to prove. This finishes the proof of the case $\rho = \infty$ by taking $f(z) = h(z)$.

Now admit $\rho < \infty$. The final step of our proof is to create a $p$-adic analytic function $g(z)$ with rational coefficients and radius of convergence $\rho$ satisfying $g(\overline{\mathbb{Q}}\cap B(0,\rho)) \subseteq \overline{\mathbb{Q}}$. The procedure is somewhat similar to the one we just adopted when creating the functions $h(z)$. 

Let $(a_n)_{n \geq 1}$ be the sequence of integers such that $a_n$ is the smaller integer for which $a_n/n$ is bigger than $\ln \rho / \ln p$ for all $n \geq 1$. It is easy to see that
\[
\lim_{n \to \infty} \frac{a_n}{n} = \frac{\ln \rho}{\ln p}.
\]

Next, we consider the power series 
\begin{equation}\label{function2}
    g(z) = 1 + \sum_{n=1}^{\infty} p^{\psi_n}z^{\omega_n} \prod_{k=1}^{n}P_k(z) = 1 + \sum_{i=1}^{\infty}c_iz^i \in \mathbb{Q}[[z]].
\end{equation}
where the $\omega_n$ and $P_k(z) \in \Z[z]$ are chosen as in \eqref{function} with $S = \overline{\mathbb{Q}}\cap B(0,\rho)$. Moreover, in order to define the sequence $(\psi_n)_{n\geq1}$, we consider the expansion
\[
\prod_{k=1}^{n}P_k(z)=\sum_{i=1}^{N_n}d_i^{(n)}z^{i},
\]
where $N_n=\sum_{j=1}^{n}\deg(P_j(z))$ and define
\begin{equation}
    \psi_n = \max_{1 \leq i \leq N_n}\{ a_{\omega_n + i} - \nu_p(d_i^{(n)})\}.
\end{equation}
In particular, the coefficients $c_j$ of $g(z)$ are given by
\[c_j =  \begin{cases}p^{\psi_n}d_i^{(n)}, \ \ \text{for} \ \ j = \omega_n + i, \ \ i \in \{1,\ldots, N_n\}  \ \text{for some} \ n\in \N ,\\
   0, \ \ \text{for } \ \ \omega_n + N_n < j \leq \omega_{n+1}, \ \text{for some} \ n\in \N.
       
    \end{cases}.\]
    
We claim that $g(z)\in\Q_{\rho}[[z]]$ and $g(\QQ\cap B(0,\rho))\subseteq\QQ$, additionally if $\rho\geq 1$, then we have that $g(z)\in\Z_{\rho}[[z]]$. Indeed, we remember that the radius of convergence $\rho_g$ of $g(z)$ is given by
\[
\rho_g = \frac{1}{\limsup \sqrt[i]{|c_i|}}.
\] 
Therefore, by the choice of $\psi_n$, we have that $\psi_n + \nu_p(d_i^{(n)}) \geq a_{\omega_n + i}$ for all $i \in \{1,\ldots, N_n\}$. In particular
\[-\frac{\psi_n + \nu_p(d_i^{(n)})}{\omega_n + i} \leq - \frac{a_{\omega_n + i}}{\omega_n + i}\\
    \iff \sqrt[\omega_n + i]{|c_{\omega_n + i}|} \leq p^{-\frac{a_{\omega_n + i}}{\omega_n + i}}\] for all $i \in \{1,\ldots, N_n\}$. We observe that for each $n \in \N$ the upper bound above is achieved for some index $i$. By the choice of the $\omega_n$ (see \eqref{omega}) we have
    in each ``block'' of nonzero coefficients of the power series $g$ the maximum value of  $\sqrt[i]{|c_i|}$ being $p^{-\frac{a_i}{i}}$ for some $i$, we conclude by the choice of the sequence $(a_i)$ that $\limsup \sqrt[i]{|c_i|} = \frac{1}{\rho}$, which implies that the radius of convergence of $g$ is precisely $\rho$, as we wanted to show. Clearly, $g(\alpha) \in \overline{\mathbb{Q}}$ for all algebraic numbers in $B(0,\rho)$. Furthermore, if $\rho \geq 1$, we have $\ln\rho/\ln p \geq 0$ and the sequence $(a_n)$ must be taken with the $a_n$ being natural numbers. In particular, the coefficients $c_i$ in \eqref{function2} are integer numbers. 
    
    Finally, taking $f(z)=h(z)+g(z)$, we obtain that there exist uncountably many $p$-adic transcendental analytic functions $f\in\mathbb{Q}_{\rho}[[z]]$ such that $S_f=S$. Moreover, if $\rho\geq1$, then $f$ can be taken in $\Z_{\rho}[[z]]$ and this completes the proof.
\qed

\begin{remark}
    Note that the transcendence of the function $h(z)$ as well as that of the function $f(z)$ was obtained by Corollary \ref{c1}, which guarantees that there are only a countable number of algebraic functions with coefficients in $\mathbb{Q}$. However, we find it important to observe that all functions $h(z)$ which we constructed are transcendental. In fact, given an entire function $h(z) \in \mathbb{C}_p$, if $h(z)$ has only a finite number of zeros, then $h(z)$ is a polynomial function (see \cite[Chap. 6]{Robert}). Thus, it is not difficult to prove that a $p$-adic entire function is algebraic if and only if it is a polynomial function. Consequently, all the functions $h(z)$ are transcendental. 
\end{remark}
    
\section{Analytic Functions with Algebraic Coefficients}

The proof of Theorem \ref{theo2} will be presented as a consequence of the following more general theorem concerning power series with coefficients in dense subsets of $\mathbb{C}_p$ and a radius of convergence $\rho$.

\begin{theorem}\label{theo3}
Let $\rho \in (0, \infty]$ and let $\mathbb{K}$ be a dense subset of $\mathbb{C}_p$. Let $X$ be a countable subset of $B(0, \rho)$. For each $\alpha \in X$, fix a dense subset $E_{\alpha} \subseteq \mathbb{C}_p$ such that if $0 \in X$, then $E_0 \cap \mathbb{K} \neq \emptyset$. Then there exist uncountably many analytic functions $f \in \mathbb{K}_{\rho}[[z]]$ such that $f(\alpha) \in E_{\alpha}$ for all $\alpha \in X$.
\end{theorem}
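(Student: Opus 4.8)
The plan is to produce $f$ as a limit $f=\lim_{s}g_s$ of polynomials $g_s\in\mathbb{K}[z]$, the degrees occupied growing strictly with $s$, so that $g_s$ is a partial sum of $f$; if for each $\alpha\in X$ we have fixed a target $w_\alpha\in E_\alpha$ and arranged $g_s(\alpha)\to w_\alpha$, then, since $|\alpha|<\rho$ forces $f$ to converge at $\alpha$, we get $f(\alpha)=\lim_s g_s(\alpha)=w_\alpha\in E_\alpha$. First I would enumerate $X=\{\alpha_1,\alpha_2,\dots\}$ (the finite case being the same), taking $\alpha_1=0$ when $0\in X$; in that case the constant term is set once and for all to some $a_0\in E_0\cap\mathbb{K}$ (nonempty by hypothesis) and, every later block being a multiple of $z$, the value $f(0)=a_0$ is never disturbed. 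A dovetailing schedule then assigns to each index a pairwise disjoint infinite family of reserved degree intervals, so that at stage $s$ we ``work on'' some $\alpha_{\sigma(s)}$, each index is revisited infinitely often, and the degrees used strictly increase with $s$.

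At stage $s$, writing $\alpha=\alpha_{\sigma(s)}$ and having built $g_{s-1}$, I would add a block $\Delta_s\in\mathbb{K}[z]$ obtained by approximating, coefficient by coefficient and as closely as we please (this is where the density of $\mathbb{K}$ in $\mathbb{C}_p$ is used), the polynomial $\lambda\,z^{e_s}\prod_{k}(z-\alpha_k)$, the product over the finitely many already introduced indices $k\neq\sigma(s)$, with $e_s$ a large reserved degree and $\lambda\in\mathbb{C}_p$ a free parameter. Such a $\Delta_s$ vanishes approximately at every previously treated point — the approximation error being chosen small relative to the finitely many quantities $|\alpha_k|^{\deg\Delta_s}$, so its effect there is negligible — whereas the values $\Delta_s(\alpha)$ run over a dense subset of a ball $B(0,r_s)$, with $r_s$ comparable to $(|\alpha|/\rho)^{e_s}$; the bound on $r_s$ is forced by requiring $|{\rm coefficients\ of\ }\Delta_s|\le\rho^{-\deg\Delta_s}$, which keeps the radius of convergence at least $\rho$. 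Interleaved with these blocks I would insert sparse monomials $\mu z^{m}$ with $\mu\in\mathbb{K}$ chosen so that $|\mu|\le\rho^{-m}$ is as large as possible (the value group of $\mathbb{K}$ being dense); these push $\limsup|a_j|^{1/j}$ up to $1/\rho$, so the radius is exactly $\rho$, and since $|\mu\alpha^{m}|\le(|\alpha|/\rho)^m\to 0$ they do not affect any limit $g_s(\alpha)\to w_\alpha$. (When $\rho=\infty$ one simply drives all coefficients to $0$ rapidly and omits the monomials.)

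The key step is the bookkeeping ensuring $g_s(\alpha)\to w_\alpha$ for every $\alpha$. When $\alpha$ is first treated, I would use $r_s>0$ and the density of $E_\alpha$ to pick $w_\alpha\in E_\alpha$ inside the ball of radius $r_s$ about the running value $g_{s-1}(\alpha)$; at each later visit, the achievable values $g_s(\alpha)=g_{s-1}(\alpha)+\Delta_s(\alpha)$ being dense in a ball about the running value, I would move $g_s(\alpha)$ to within the next reserved radius of $w_\alpha$, the ultrametric inequality guaranteeing that (because the radii decrease) this move fits inside the currently available ball. Hence the radii along the visits of $\alpha$ shrink to $0$ and $g_s(\alpha)\to w_\alpha$. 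Passing to the limit yields $f\in\mathbb{K}_\rho[[z]]$ with $f(\alpha)=w_\alpha\in E_\alpha$ for all $\alpha\in X$; and since at infinitely many stages there are at least two genuinely distinct admissible values of the parameter $\lambda$ (or of the reserved degrees), one obtains uncountably many such $f$. I expect the only real obstacle to be exactly this tension: because $f$ must converge throughout $B(0,\rho)$, the corrections one can still make to $f(\alpha)$ unavoidably shrink, so the targets $w_\alpha$ cannot be prescribed in advance but must be chosen adaptively close to the running values, and the whole scheme — disjoint reserved degrees, the $\rho^{-\deg\Delta_s}$ size bound, the ultrametric estimates, and the spacer monomials — has to be balanced so that the shrinking balls still reach the chosen targets while the radius of convergence is held exactly equal to $\rho$.
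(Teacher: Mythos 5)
Your architecture (polynomial partial sums, blocks built on $z^{e}\prod_k(z-\alpha_k)$, density of $\mathbb{K}$ for the coefficients and of $E_\alpha$ for the values, uncountability from the infinitely many admissible choices at each stage) is in the right family, but your insistence that every block have all its coefficients in $\mathbb{K}$ forces only \emph{approximate} vanishing at previously treated points, which in turn forces the whole dovetailing/revisiting/adaptive-target machinery. Inside that machinery there is a step that fails as written: the spacer monomials. At the visit to $\alpha$ reserved at degree $e$, the coefficient bound $|\lambda|\lesssim\rho^{-e}$ caps the steering radius at roughly $(|\alpha|/\rho)^{e}$, while a spacer $\mu z^{m}$ with $|\mu|$ close to $\rho^{-m}$ (which you need in order to push $\limsup|a_j|^{1/j}$ up to $1/\rho$) perturbs the running value at $\alpha$ by roughly $(|\alpha|/\rho)^{m}$. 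Since $|\alpha|/\rho<1$, every spacer sitting at a degree $m$ \emph{below} the next reserved degree $e$ produces a perturbation strictly larger than every subsequent steering radius; in the ultrametric, $|u+v|=|u|$ whenever $|v|<|u|$, so once such a perturbation lands, no later correction can bring $g_s(\alpha)$ back to $w_\alpha$, and the limit is $w_\alpha$ plus an error of the size of the first such spacer — which need not lie in $E_\alpha$. Your sentence ``since $|\mu\alpha^{m}|\le(|\alpha|/\rho)^m\to 0$ they do not affect any limit'' is exactly where this is hidden: the perturbations tending to $0$ keeps the sequence Cauchy, but does not keep the limit equal to the target you committed to. The same objection applies, less dramatically, to the approximation errors of other indices' blocks unless you tie each error tolerance to the steering radius of the \emph{next} visit of every already-treated point. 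The spacer problem is repairable (subtract the predetermined tail $\sum_{m}\mu_m\alpha^m$ from the target at each visit, so the spacer contributions telescope; or build the spacers themselves on $\prod_k(z-\alpha_k)$ so they vanish at old points), but the proposal does not contain either fix.

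The paper's proof shows that all of this delicacy can be avoided. It takes blocks $\bigl(\delta_k z^k+\epsilon_k z^{k+1}\bigr)\prod_{j=2}^{k}(z-\alpha_j)$ with coefficients in $\mathbb{C}_p$, so that each block vanishes \emph{exactly} at $0$ and at all previously treated $\alpha_j$: the value $f(\alpha_{k+1})$ is steered into $E_{\alpha_{k+1}}$ once (via $\epsilon_k$ and density of $E_{\alpha_{k+1}}$) and never disturbed again, and the $k$-th coefficient of $f$ is steered into $\mathbb{K}$ once (via $\delta_k$ and density of $\mathbb{K}$, since only finitely many blocks touch any given coefficient) and never disturbed again. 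No revisiting, no adaptive targets, no error budget. The radius of convergence is handled not by spacer monomials but by taking $\delta_k,\epsilon_k\in B(0,p^{-k^k})$ so the perturbation $h$ is entire, and adding it to the fixed auxiliary function $g\in\mathbb{Q}_{\rho}[[z]]$ from the proof of Theorem \ref{theo1}, whose radius of convergence is exactly $\rho$; then $f=g+h$ has radius exactly $\rho$ automatically. If you want to keep your self-contained scheme, the minimal repairs are: (i) let the blocks keep their exact roots at old points and put only the newly finalized coefficient into $\mathbb{K}$, and (ii) either drop the spacers in favour of an auxiliary series of radius exactly $\rho$, or make the spacers vanish at all old points as well.
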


\begin{proof}
Let $\rho \in (0, \infty]$ be a positive radius of convergence. Consider the function $g(z) \in \mathbb{Q}_{\rho}[[z]]$ as in the proof of Theorem \ref{theo2}. Our desired functions $f(z) \in \mathbb{K}[[z]]$ will be given as perturbations of the function $g$. In other words, our goal is to show that there exist uncountably many ways to construct a $p$-adic entire function $h(z) \in \mathbb{C}_p[[z]]$ such that the function $f(z) = g(z) + h(z) \in \mathbb{K}_{\rho}$ and $f(\alpha) \in E_{\alpha}$ for all $\alpha \in X$.

Write $X = \{\alpha_1, \alpha_2, \ldots\}$ and suppose, without loss of generality, that $X$ is infinite and $0 \in X$, say $\alpha_1 = 0$. Since $E_0 \cap \mathbb{K} \neq \emptyset$, there exists $\delta_0 \in \mathbb{C}_p$ such that $a_0 + \delta_0 \in E_0 \cap \mathbb{K}$. Setting $f_0(z) = g(z) + \delta_0$, we have that the first coefficient $a_0 + \delta_0$ of $f_0(z)$ belongs to $\mathbb{K}$ and $f_0(\alpha_1) \in E_{\alpha_1}$.

Note that, for any dense subset $D \subseteq \mathbb{C}_p$, if $a, b \in \mathbb{C}_p$ with $b \neq 0$, fixing any $r > 0$, we can choose infinitely many $d \in B(0, r) \setminus \{0\}$ such that $a + bd \in D$. Indeed, just choose $\alpha \in D \cap B(a, |b| \cdot r)$ and set $d = \frac{\alpha - a}{b}$. In particular, since $\mathbb{K}$ is a dense subset of $\mathbb{C}_p$, for any real number $\sigma_1 > 0$, there exist infinitely many $\delta_1 \in B(0, p^{\sigma_1}) \setminus \{0\}$ such that $a_1 + \delta_1 \in \mathbb{K}$. Choose one and set $f_1(z) = f_0(z) + \delta_1 z$. This arrangement ensures that both the first and second coefficients of $f_1(z)$ belong to $\mathbb{K}$. Similarly, since $\alpha_2 \neq 0$ and $E_{\alpha_2}$ is a dense subset of $\mathbb{C}_p$, there exist infinitely many $\epsilon_1 \in B(0, p^{\sigma_1}) \setminus \{0\}$ such that $f_1(\alpha_2) + \epsilon_1 \alpha_2^2 \in E_{\alpha_2}$. We choose one and set $\tilde{f}_1(z) = f_1(z) + \epsilon_1 z^2$. Therefore, both the first and second coefficients of $\tilde{f}_1(z)$ belong to $\mathbb{K}$ and $\tilde{f}_1(\alpha_i) \in E_{\alpha_i}$ for $i = 1, 2$.

Now, we proceed by induction. Suppose that
\[
\tilde{f}_{m}(z) = g(z) + \tilde{P}_m(z) = \sum_{k \geq 0} c_k^{(m)} z^k
\]
has been constructed such that
\[
\tilde{P}_m(z) = \delta_0 + \sum_{k=1}^{m} \left( \delta_k z^k + \epsilon_k z^{k+1} \right) \cdot \prod_{j=2}^{k} (z - \alpha_j) \in \mathbb{C}_p[z],
\]
with $\delta_k, \epsilon_k \in B(0, p^{\sigma_k}) \setminus \{0\}$, $c_k^{(m)} \in \mathbb{K}$ for all $k = 0, \ldots, m$, and $\tilde{f}_m(\alpha_{j}) \in E_{\alpha_{j}}$ for $j = 1, \ldots, m+1$.

Let us construct $\tilde{f}_{m+1}(z)$. Firstly, since $\mathbb{K}$ is a dense subset of $\mathbb{C}_p$ and $\alpha_i \neq 0$ for all $i \geq 2$, we can define
\[
f_{m+1}(z) = \tilde{f}_{m}(z) + \delta_{m+1} z^{m+1} \prod_{k=2}^{m+1} (z - \alpha_k),
\]
choosing $\delta_{m+1}$ in $B(0, p^{\sigma_{m+1}}) \setminus \{0\}$ such that
\[
c^{(m)}_{m+1} + (-1)^{m-1} \alpha_2 \cdots \alpha_{m+1} \cdot \delta_{m+1} \in \mathbb{K}.
\]
Next, since $E_{\alpha_{m+2}}$ is a dense subset of $\mathbb{C}_p$, we can set
\[
\tilde{f}_{m+1}(z) = f_{m+1}(z) + \epsilon_{m+1} z^{m+2} \prod_{k=2}^{m+1} (z - \alpha_k),
\]
where we choose $\epsilon_{m+1}$ in $B(0, p^{\sigma_{m+1}}) \setminus \{0\}$ such that
\[
\tilde{f}_{m+1}(\alpha_{m+2}) = f_{m+1}(\alpha_{m+2}) + \epsilon_{m+1} \alpha_{m+2}^{m+2} \prod_{k=2}^{m+1} (\alpha_{m+2} - \alpha_k) \in E_{\alpha_{m+2}}.
\]

Hence, by induction, we conclude that for every suitable sequence $(\sigma_n)_{n \geq 0}$ there exists a sequence $(\tilde{f}_n)_{n \geq 0}$ of power series 
\[
\tilde{f}_{n}(z) = g(z) + \tilde{P}_n(z) = \sum_{k \geq 0} c_k^{(n)} z^k
\]
such that
\[
\tilde{P}_n(z) = \delta_0 + \sum_{k=1}^{n} (\delta_k z^k + \epsilon_k z^{k+1}) \cdot \prod_{j=2}^{k} (z - \alpha_j) \in \mathbb{C}_p[z],
\]
with $\delta_k, \epsilon_k \in B(0, p^{\sigma_k}) \setminus \{0\}$, $c_k^{(n)} \in \mathbb{K}$ for all $k=0, \ldots, n$, and $\tilde{f}_n(\alpha_{j}) \in E_{\alpha_{j}}$ for $j=1, \ldots, n+1$.

Now we choose a suitable sequence $(\sigma_n)_{n \geq 0}$ such that $\tilde{P}_n(z) \to h(z)$ as $n \to \infty$ for any choice of $\delta_k, \epsilon_k \in B(0, p^{\sigma_k}) \setminus \{0\}$, where $h(z)$ is an entire function.

Observe that for each $n \in \mathbb{N}$ and $z \in \mathbb{C}_p$ we have
\[
\left| z^{n+1} \prod_{j=2}^{n} (z - \alpha_j) \right| \leq \max \{ |z|^{2n}, \ \rho^{2n} \}.
\]
Take $\sigma_n = -n^n$ for all $n \geq 0$ and note that $M^{2n} / p^{-n^n} \to 0$ as $n \to \infty$, for all fixed $M > 0$.

Hence, with $\sigma_n = -n^n$ for all $n \geq 0$, we have that $\tilde{P}_n(z) \to h(z)$, where $h(z)$ is an entire function, and $f(z) = g(z) + h(z) \in \mathbb{K}_{\rho}[[z]]$ is such that $f(\alpha_i) \in E_{\alpha_i}$ for all integers $i \geq 1$. Since, for all $k \geq 1$, there exist infinitely many possible choices for $\epsilon_k$ and $\delta_k$, we conclude that there exist uncountably many analytic functions $f \in \mathbb{K}_{\rho}[[z]]$ satisfying the conditions of Theorem \ref{theo3}. This concludes the proof.
\end{proof}

\subsection*{Proof that Theorem \ref{theo3} implies Theorem \ref{theo2}}
In the statement of Theorem \ref{theo3}, take $X = \mathbb{Q} \cap B(0, \rho)$ and $\mathbb{K} = \mathbb{Q}$. Write $S = \{\alpha_1, \alpha_2, \ldots\}$ and $\mathbb{Q} \setminus S = \{\beta_1, \beta_2, \ldots\}$ (one of them may be finite). Now, we fix
\[
E_{\alpha} := \begin{cases}
\mathbb{Q} & \text{if} \quad \alpha \in S, \\
\mathbb{C}_p \setminus \mathbb{Q}_p & \text{if} \quad \alpha \in \mathbb{Q} \setminus S.
\end{cases}
\]

By Theorem \ref{theo3}, there exist uncountably many analytic functions $f \in \mathbb{Q}_{\rho}[[z]]$ such that $f(\alpha) \in E_{\alpha}$ for all $\alpha \in \mathbb{Q}$, that is, $S_f = S$. Since there exist uncountably many analytic functions $f \in \mathbb{Q}_{\rho}[[z]]$ such that $S_f = S$ and $\mathbb{Q}$ is a countable subfield of $\mathbb{C}_p$, we conclude that there exist uncountably many $p$-adic transcendental analytic functions $f \in \overline{\mathbb{Q}}_{\rho}[[z]]$ such that $S_f = S$.
\qed

\begin{remark}
Finally, we note that our construction relies entirely on the countability of $\overline{\mathbb{Q}}$. Therefore, we do not know if for uncountable subsets $S$ of $\overline{\mathbb{Q}}_p \cap B(0, \rho)$ there exist transcendental analytic functions $f$ with radius of convergence $\rho \in (0, +\infty]$ such that for $\alpha \in \overline{\mathbb{Q}}_p$, we have $f(\alpha) \in \overline{\mathbb{Q}}_p$ if and only if $\alpha \in \mathcal{S}$, which would be another generalization of Mahler's Problem $C$ over $\mathbb{C}_p$. We consider this to be an interesting question.
\end{remark}

\end{document}